\tikzset{invmidarrow/.style={postaction=decorate,decoration={markings,mark={between positions 0 and 1 step #1
with {\arrow{latex reversed}}}}}}
\numberwithin{equation}{section}
\newcommand{\be}{\begin{equation}}
\newcommand{\ee}{\end{equation}}
\newcommand{\id}[1]{\mathbbm{1}_{#1}}
\newcommand{\iden}{\mathbbm{1}}
\newcommand{\imag}{\iota}
\newcommand{\md}{monopole-dimer}
\newcommand{\mdm}{monopole-dimer model}
\DeclareMathOperator{\sgn}{sgn}
\newcommand{\cO}{\mathcal O}
\newcommand{\dicot}[1]{\mathcal{#1}}
\newcommand{\dicotpf}[1]{\mathcal{#1}}
\newcommand{\dicotmat}[1]{\mathcal{#1}}
\newcommand{\dicotwt}[1]{\text{wt}(#1)}
\def\Ddots{\mathinner{\mkern1mu\raise\p@
\vbox{\kern7\p@\hbox{.}}\mkern2mu
\raise4\p@\hbox{.}\mkern2mu\raise7\p@\hbox{.}\mkern1mu}}
\newtheorem{thm}{Theorem}[section]
\newtheorem{prop}[thm]{Proposition}
\newtheorem{cor}[thm]{Corollary}
\newtheorem{lem}[thm]{Lemma}
\newtheorem{defn}[thm]{Definition}
\newtheorem{rem}[thm]{Remark}
\newtheorem{eg}[thm]{Example}
\newtheorem{ques}[thm]{Question}
\title[Squareness for the Monopole-Dimer model]
{Squareness for the Monopole-Dimer model}
\author{Arvind Ayyer}
\address{Arvind Ayyer, Department of Mathematics, 
Indian Institute of Science, Bangalore  560012, India.}
\email{arvind@iisc.ac.in}
\subjclass[2010]{82B20, 05C70}
\keywords{Monopole-dimer model, Dicots, Dimer model, Determinantal formula,
Kasteleyn orientation, Partition function, Free energy}
\date{\today}
\begin{document}

\begin{abstract}
The monopole-dimer model introduced recently 
is an exactly-solvable signed generalisation of the dimer model.
We show that the partition function of the monopole-dimer model on a graph invariant under a fixed-point free involution is a perfect square. We give a combinatorial interpretation of the square-root of the partition function for such graphs in terms of a monopole-dimer model on a new kind of graph with two types of edges which we call a dicot. The partition function of the latter can be written as a determinant, this time of a complex adjacency matrix. This formulation generalises T. T. Wu's assignment of imaginary orientation for the grid graph to planar dicots. As an application, we compute the partition function for a family of non-planar dicots with positive weights.
\end{abstract}

\maketitle

\section{Introduction}
The dimer model (also known as perfect matchings or $1$-factors) for planar graphs is a well-studied exactly solvable statistical-mechanical model whose partition function can be evaluated as a Pfaffian \cite{kasteleyn1961}. A generalisation to the monomer-dimer model with boundary mono\-mers has been recently obtained by Giuliani-Jauslin-Lieb \cite{giuliani-jauslin-lieb-2016}. In another direction, the monopole-dimer model was introduced recently \cite{ayyer2015} as a generalisation of the double-dimer model. The partition function of the \mdm{} was shown to be given by a determinant for planar graphs. Further, it was found that the partition function of the model on some planar graphs such as the cycle graph $C_{4n}$ and the $2m \times 2n$ grid graph could be expressed as a perfect square. 

In this work, we give a combinatorial explanation for the squareness phenomenon for the monopole-dimer model in a general setting using a symmetry argument in the spirit of Jockusch \cite{jockusch-1994} and Ciucu \cite{ciucu-1997}  for the dimer model.
To that end, we will define a generalisation of graphs with two kinds of edges (solid and dashed), which we will call dicots\footnote{short for {\em dicotyledons}, a plant whose embryo contains two leaves.} in Section~\ref{sec:md-dicot}.  We will define the monopole-dimer model for dicots and show that the partition function of the model can be written as a determinant in Section~\ref{sec:md-dicot}. When the dicot is planar, we will show that there exists a generalisation of the Kasteleyn orientation that makes it possible to assign a natural combinatorial weight to the model.

We note that this formulation also generalises Wu's choice of  orientation of 
$\imag =\sqrt{-1}$ for vertical edges \cite{wu1962} for two-dimensional grid graphs. This trick has proved useful in analysing the dimer model on subgraphs of $\mathbb{Z}^2$; see for example \cite{kenyon1997}.

We will consider graphs with a special automorphism in Section~\ref{sec:two-md-models} and prove that the \md{} partition function is a square.

In Section~\ref{sec:families}, we will provide exact results for the partition functions for certain families of dicots and use those results to calculate the free energy in each case. In Sections~\ref{sec:cycles} and ~\ref{sec:rect-grids}, we will explain the squareness for cycles and rectangular grids observed in \cite{ayyer2015}.
We will then consider dicots on the rectangular grid with additional vertical dashed edges in Section~\ref{sec:rect-grids-add-vert}. 
In Section~\ref{sec:wheel}, we will consider a family of non-planar dicots for which the monopole-dimer model is manifestly positive.

\section{Monopole-Dimer Model on Dicots}
\label{sec:md-dicot}
In this section, we will define dicots, the monopole-dimer model on dicots and prove the result about the partition function of the latter. We then consider the special case of planar dicots, define the Kasteleyn orientation for such dicots and prove the analogous result for the monopole-dimer model there.

\noindent
{\bf Remark on notation:} We will always use unaccented symbols for objects associated to graphs and calligraphic symbols for those associated to dicots.

Dicots are generalisations of graphs with two sets of edges.
We will denote the two sets of edges on $V$ by $A$ and $B$. Edges belonging to $A$ will be denoted by solid lines, and those belonging to $B$, by dashed lines. We will also interchangeably use the term solid (resp. dashed) edges to mean $A$-type (resp. $B$-type) edges. 

\begin{defn} \label{def:dicot}
We say that $\dicot{G} = [V,A,B]$ is a {\bf dicot}  if $\dicot{G}$ satisfies the following conditions.
\begin{enumerate}
\item $[V,A \cup B]$ is a bipartite graph.
\item The subgraphs $[V,A]$ and $[V,B]$ are simple, i.e. have no loops or multiple edges.
\end{enumerate}
\end{defn}

In other words, there can be at most two edges between any two vertices in $\dicot{G}$, and if there are two edges, then both must be of different type. 
An oriented dicot is one where each solid edge is assigned a direction.
We will denote vertex weights by $x(v)$ for $v \in V$ and edge weights as $a(v,v') \equiv a(v',v)$ whenever $(v,v') \in A$ and $b(v,v') \equiv b(v',v)$ whenever $(v,v') \in B$.
Dicots with no $B$-type edges are thus bipartite graphs.

Throughout the paper, graphs and dicots will be undirected, both vertex- and edge- weighted.  For the purpose of enumeration, we will prescribe an orientation  just as for the original dimer model. Graphs will be simple (i.e. without loops or multiple edges).
The weights are positive real numbers. 
Unweighted versions are taken care of by setting the weights to be $1$.

\begin{rem}
\label{rem:label}
Throughout the article, unless stated explicitly, graphs and dicots with $n$ vertices will have vertex set $V =\{1,\dots,n\}$ and the orientation for graphs (resp. dicots) associated to edges (resp. solid edges) will point from smaller to larger label. 
\end{rem}

\begin{defn} 
\label{def:completedicot}
The {\bf complete dicot}, denoted $\dicot{D}_{a,b}$, is the dicot on $a+b$ vertices where the separate underlying graphs $[V,A]$ and $[V,B]$ with solid  and dashed edges respectively forms the complete bipartite graph $K_{a,b}$.
\end{defn}

We now generalise the monopole-dimer model for graphs \cite{ayyer2015} to dicots. Recall that monopole-dimer configurations for graphs consist of
directed loops of even length including doubled edges  with the property that every vertex belongs to either zero or two edges.

\begin{defn} \label{def:loopvertconf}
A {\bf monopole-dimer configuration} of $\dicot{G}=[V,A,B]$ with orientation $\cO$ is a monopole-dimer configuration of the graph $[V, A \cup B]$ with the additional proviso that every loop must contain an even number of dashed edges. Let $\mathcal{L}(\dicot{G})$ be the set of monopole-dimer configurations of  $\dicot{G}$.
\end{defn}

The weight of a monopole-dimer configuration $C$ is given by a formula similar to that of the usual monopole-dimer configuration. 
Let $C \in 
\mathcal{L}(\dicot{G})$ be a monopole-dimer configuration on  $\dicot{G}$. Decompose $C$ into loops $(\ell_1,\dots,\ell_q)$ and isolated vertices $(w_1,\dots,w_r)$.

We first define the weight of a loop $\ell$. Let $\ell = (v_1,\dots,v_{2m})$ with edges $e_1,\dots,e_{2m}$ such that $e_j = (v_j,v_{j+1})$ with $v_{2m+1} \equiv v_1$. Then the weight of the loop is given by
\be \label{wtloop}
w(\ell) = - \; \prod_{j=1}^{2 m}  
\begin{cases}
+a(v_j,v_{j+1}) & \text{if $e_j$ is solid and $v_{j} \to v_{j+1}$ in $\cO$,} \\
-a(v_j,v_{j+1}) & \text{if $e_j$ is solid and $v_{j+1} \to v_{j}$ in $\cO$,} \\
\imag \, b(v_j,v_{j+1}) & \text{if $e_j$ is a dashed edge,}
\end{cases}
\ee
where we recall that $\imag = \sqrt{-1}$. 
Notice that the sign of the loop depends on the relative orientations of the solid edges and the number of dashed edges.
The weight of the monopole-dimer configuration $C$ is then given by
\be \label{wtconf}
\dicotwt C = \prod_{j=1}^q w(\ell_j) \prod_{j=1}^r x(w_j).
\ee

\begin{defn}
\label{def:mondim-dicot}
The {\bf monopole-dimer model} on a dicot $\dicot{G}$ with orientation $\cO$ is the collection 
$\mathcal{L}(\dicot{G})$ of dicot monopole-dimer configurations on $\dicot{G}$ with the weight of each configuration given by \eqref{wtconf}.
\end{defn}

The partition function of the monopole-dimer model on  $\dicot{G}$ is
\be \label{dicotpf}
\dicotpf Z(\dicot{G}) = \sum_{C \in \mathcal{L}(\dicot{G})} \dicotwt C.
\ee

We now define an adjacency matrix for a dicot $\dicot{G}$.
\begin{defn} \label{def:matrix}
The {\bf complex adjacency matrix} $\dicotmat K \equiv \dicotmat{K}(\dicot{G})$ associated to  $\dicot{G}$ is the matrix indexed by the vertices of $\dicot{G}$ whose entries are given by
\be 
\dicotmat{K}_{v,v'} = \begin{cases}
x(v) & v'=v \\
a(v,v') + \imag \, b(v,v') & \text{if $v \to v'$ in $\cO$,} \\
-a(v,v') + \imag \, b(v,v') & \text{if $v' \to v$ in $\cO$.}
\end{cases}
\ee
We will treat $a(v,v')$ (resp. $b(v,v')$) as 0 if $v$ and $v'$ are not connected by a
solid (resp. dashed) edge.
\end{defn}

\begin{thm}
\label{thm:dicot-partn-fn}
The partition function of the monopole-dimer model on  $\dicot{G}$ is given by
\be \label{partfn-general}
\dicotpf{Z}(\dicot{G}) = \det \dicotmat{K}(\dicot{G}).
\ee
\end{thm}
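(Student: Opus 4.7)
The plan is to expand $\det \dicotmat{K}(\dicot{G})$ via the Leibniz formula and match the surviving terms with the directed monopole-dimer configurations on $\dicot{G}$, following the strategy used for graphs in \cite{ayyer2015}. Starting from
\[
\det \dicotmat{K}(\dicot{G}) = \sum_\sigma \sgn(\sigma) \prod_{v \in V} \dicotmat{K}_{v, \sigma(v)},
\]
I would decompose each permutation $\sigma$ into disjoint cycles. The fixed points of $\sigma$ furnish the factor $\prod_j x(w_j)$ corresponding to the isolated vertices of a configuration. A non-trivial cycle of $\sigma$ contributes only when each consecutive pair of its entries is joined by an edge of $A \cup B$, and bipartiteness of $[V, A \cup B]$ then forces every such cycle to have even length $2m$, hence to have sign $(-1)^{2m - 1} = -1$, matching the leading minus sign of \eqref{wtloop}.

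Next, for each cycle $(v_1, \ldots, v_{2m})$ I would expand
\[
\prod_{j=1}^{2m} \dicotmat{K}_{v_j, v_{j+1}} = \prod_{j=1}^{2m} \bigl(\pm a(v_j, v_{j+1}) + \imag\, b(v_j, v_{j+1})\bigr)
\]
by distributing the solid summand $\pm a$ and the dashed summand $\imag\, b$ at each edge. These choices are indexed by subsets $S \subseteq \{1, \ldots, 2m\}$ recording which edges are taken to be dashed, and by Definition~\ref{def:matrix} together with \eqref{wtloop}, the resulting term equals the weight $w(\ell)$ of the directed dicot loop with vertex sequence $(v_1, \ldots, v_{2m})$ and dashed edges indexed by $S$. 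Taking the product over all cycles of $\sigma$ and combining with the fixed-point factors thus reproduces $\dicotwt{C}$ for a corresponding directed monopole-dimer configuration $C$ of $\dicot{G}$.

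The main point to verify, and where the dicot structure genuinely enters, is that the subsets $S$ of odd cardinality contribute zero and therefore have no counterpart in $\mathcal{L}(\dicot{G})$. My plan is to pair each non-trivial cycle of length at least $4$ with its reverse $(v_1, v_{2m}, \ldots, v_2)$ and observe that reversing the traversal negates each solid factor $\pm a$ (the direction of traversal now opposes $\cO$) while leaving each dashed factor $\imag\, b$ unchanged (because $b$ is symmetric). For a fixed $S$, the two terms thus differ by $(-1)^{2m-|S|} = (-1)^{|S|}$, summing to $0$ when $|S|$ is odd and to twice the common value when $|S|$ is even; this also accounts naturally for both traversals of each admissible loop of length at least $4$. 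For $2$-cycles the corresponding cancellation happens inside the Leibniz expansion itself: the two $|S| = 1$ terms give opposite imaginary values, while the $|S|=0$ and $|S|=2$ terms survive and yield the weights of the all-solid and all-dashed $2$-loops. Combining these cancellations with the bipartite parity argument establishes the identity $\det \dicotmat{K}(\dicot{G}) = \dicotpf{Z}(\dicot{G})$, with the orientation-and-parity bookkeeping in the last step being the place where I would be most careful.
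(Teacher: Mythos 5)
Your proposal is correct and follows essentially the same route as the paper's proof: Leibniz expansion of $\det \dicotmat{K}(\dicot{G})$, reduction via bipartiteness to permutations with fixed points and even cycles, expansion of each entry into its solid and dashed summands, and pairing each cycle of length at least $4$ with its reverse so that the terms with an odd number of dashed edges cancel (with the $2$-cycle case treated separately in both arguments). Your per-subset bookkeeping $(-1)^{|S|}$ is just an unpacked version of the paper's observation that $z_1 \cdots z_{2m} + (-\bar{z}_1)\cdots(-\bar{z}_{2m}) = 2\,\mathrm{Re}(z_1 \cdots z_{2m})$, and your direct term-by-term identification with $w(\ell)$ replaces, equivalently, the paper's explicit sign count $(-1)^{m-p+1+s}$.
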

See \cite{ayyer2013} for a similar result about the determinant of such matrices.

\begin{proof}
Note that if we do not have any dashed edges, $\dicot{G}$ reduces to a bipartite graph and the proof is a special case of the partition function of the monopole-dimer model on graphs \cite[Theorem 2.5]{ayyer2015}. Our proof here follows the same strategy. 

It should be clear from Definition~\ref{def:dicot}(1) and the complex adjacency matrix in Definition~\ref{def:matrix} that terms in the Leibniz expansion of the determinant of $\dicotmat{K}(\dicot{G})$ that are non-zero correspond to permutations with singletons and even cycles. Since off-diagonal non-zero entries in $\dicotmat{K}(\dicot{G})$ consist of two terms, we also 
expand the determinant in terms of monomials in these variables.
We will first show that these terms are equinumerous with configurations in $\mathcal{L}(\dicot{G})$. We need to show that the signs and weights of these configurations are counted by the determinant.

We will work in the most general setting of the complete dicot $\dicot{D}_{a,b}$ (see Definition~\ref{def:completedicot}) with generic vertex- and edge-weights. 
The result for a generic dicot will follow by setting some of these weights to zero or one.
Let $C \in \mathcal{L}(\dicot{D}_{a,b})$ be given by $C = (\ell_1,\dots,\ell_m;\, w_1,\dots,w_{2p})$, where every vertex $v \in V$ is either one of the isolated vertices $w_j$ or belongs to exactly one loop $\ell_j$. There will be many configurations with the same isolated vertices and the same loop structure which will also contribute because of various choices of solid and dashed edges, as well as the direction of the loops. Since each of the loops $\ell_j$ can be summed separately, it suffices to look at a single loop $\ell = (v_1,\dots,v_{2m})$ with edge $e_j = (v_j,v_{j+1})$.
The entry  corresponding to $e_j$ in $\dicotmat{K} \equiv \dicotmat{K}(\dicot{D}_{a,b})$ is then $\dicotmat{K}_{v_j,v_{j+1}} = z_j$, a complex number. Then the entry corresponding
to the reversed edge $(v_{j+1},v_j)$, is $\dicotmat{K}_{v_{j+1},v_j} = -\bar{z}_j$, it's negative complex conjugate.
If $m=1$, the contribution of $C$ is $z_1 (-\bar{z}_1) = - |z_1|^2$, which is real. 
If $m \geq 2$, we add the contribution of $C$ to its reverse $\widehat{C}$ in $\det \dicotmat{K}$, we get
\[
z_1 \cdots z_{2m} + (-\bar{z}_1) \cdots (-\bar{z}_{2m}) 
= 2 \;\text{Re}(z_1 \cdots z_{2m}).
\]
In both cases, we get a real number, which means that the only loops which contribute are those with an even number of dashed edges, and they contribute with factor two precisely when the loop is nontrivial ($m \geq 2$). These are precisely the allowed terms in the monopole-dimer configuration; see Definition~\ref{def:loopvertconf}.

We will now show that the signs match. For the loop $\ell$ considered above, let $S$ be the set of solid edges with $\# S = 2p$, and among these, $S_-$ be those with opposite sign and $\# S_- = s$. Then the sign of the loop is given, according to \eqref{wtloop}, $\sgn(\ell) =  (-1)^{m-p+1+s}$. 
The corresponding term in the determinant is the part 
in the expansion of $\text{Re}(
\dicotmat{K}_{v_1,v_2} \dots \dicotmat{K}_{v_{2m-1},v_{2m}} \dicotmat{K}_{v_{2m},v_1} )$ which is proportional to $a(e)$ for $e \in S$ and $b(e)$ for $e \notin S$ up to sign. The sign is then given by $i^{2m-2p} (-1)^s = (-1)^{m-p+s}$ times the sign of the corresponding permutation. Finally, it is well-known that the sign of a permutation can be obtained by assigning a negative sign to each even cycle.
Thus, the sign from the determinant matches that from \eqref{wtloop}.
\end{proof}

We illustrate Theorem~\ref{thm:dicot-partn-fn} in the following example.

\begin{eg} \label{eg:complete-dicot-2}
Consider the complete dicot $\dicot{D}_{2,2}$. The vertices are labeled $(1,2,3,4)$, where $\{1,3\}$ and $\{2,4\}$ form the partitions, and the orientation is the standard one (see Remark~\ref{rem:label}).
Let the weight on vertex $j$ be $x_j$ and the edge weights be as shown in Figure~\ref{fig:exampled4}.

\begin{figure}[h]
\begin{center}
\begin{tikzpicture} [>=triangle 45]
\draw (0.0,0.0) node[circle,inner sep=2pt,draw] {1};
\draw (-0.5, -0.5) node [color = red] {$x_1$};
\draw (0.0,3.0) node[circle,inner sep=2pt,draw] {2};
\draw (-0.5, 3.5) node [color = red] {$x_2$};
\draw (3.0,3.0) node[circle,inner sep=2pt,draw] {3};
\draw (3.5, 3.5) node [color = red] {$x_3$};
\draw (3.0,0.0) node[circle,inner sep=2pt,draw] {4};
\draw (3.5, -0.5) node [color = red] {$x_4$};

\draw [-] (0,0.3) -- node [right]  {$a_{12}$} (0,2.7) ;
\draw [-,out=135,in=-135,style=dashed] (-0.3,0) to node [left]  {$b_{12}$} (-0.3,3) ;

\draw [-] (0.3,3) -- node [below]  {$a_{23}$} (2.7,3) ;
\draw [-,out=45,in=135,style=dashed] (0,3.3) to node [above]  {$b_{23}$} (3,3.3) ;

\draw [-] (3,2.7) -- node [left]  {$a_{34}$} (3,0.3) ;
\draw [-,out=-45,in=45,style=dashed] (3.3,3) to node [right]  {$b_{34}$} (3.3,0) ;

\draw [-] (0.3,0) -- node [above]  {$a_{14}$} (2.7,0) ;
\draw [-,out=-45,in=-135,style=dashed] (0,-0.3) to node [below]  {$b_{14}$} (3,-0.3) ;
\end{tikzpicture}
\caption{The complete dicot $\dicot{D}_{2,2}$ with generic vertex- and edge-weights.}
\label{fig:exampled4}
\end{center}
\end{figure}

\noindent
The complex adjacency matrix in the naturally ordered vertex basis is given by
\[
\dicotmat{K}(\dicot{D}_{2,2}) = 
\left(
\begin{matrix}
x_1 & a_{12} + \imag \, b_{12} & 0 & a_{14}+ \imag \, b_{14} \\
-a_{12} + \imag \, b_{12} & x_2 & a_{23} + \imag \, b_{23} & 0 \\
0 & -a_{23} + \imag \, b_{23} & x_3 & a_{34} + \imag \, b_{34} \\
-a_{14} + \imag \, b_{14} & 0 & -a_{34} + \imag \, b_{34} & x_4
\end{matrix}
\right),
\]
and
\begin{align*}
\dicotpf{Z} & (\dicot{D}_{2,2}) = \det \dicotmat{K}(\dicot{D}_{2,2}) \\
=& a_{12}^2 a_{34}^2+a_{12}^2 b_{34}^2+a_{34}^2 b_{12}^2+a_{23}^2 a_{14}^2+a_{23}^2 b_{14}^2 +b_{23}^2 b_{14}^2 +a_{14}^2 b_{23}^2+b_{12}^2 b_{34}^2\\
& +2 a_{12} a_{23} a_{34} a_{14} + 2 a_{12} a_{23} b_{34} b_{14} + 2 a_{12} a_{34} b_{23} b_{14}+2 a_{23} a_{34} b_{12} b_{14} \\
& -2 a_{12} a_{14} b_{23} b_{34}-2 a_{23} a_{14} b_{12} b_{34}-2 a_{34} a_{14} b_{12} b_{23}-2 b_{12} b_{23} b_{34} b_{14}\\
& +a_{12}^2 x_3 x_4+a_{23}^2 x_1 x_4+a_{34}^2 x_1 x_2+a_{14}^2 x_2 x_3+b_{12}^2 x_3 x_4+b_{23}^2 x_1 x_4 \\
& +b_{34}^2 x_1 x_2+b_{14}^2 x_2 x_3+x_1 x_2 x_3 x_4.
\end{align*}
\end{eg}

For the remainder of this section, we will consider the special case of  dicots without multiple edges which can be embedded in a plane.

\begin{defn}
\label{def:pure-dicot}
A dicot $\dicot G = [V,A,B]$ is said to be {\bf pure} if there is at most one edge (either solid or dashed) between any pair of distinct vertices.
\end{defn}

Note that complete dicots $\dicot D_{a,b}$ are never pure.

\begin{defn}
\label{def:planar-dicot}
A dicot $\dicot G = [V,A,B]$ is said to be {\bf planar} if $\dicot G$ is pure, $[V, A \cup B]$ is a planar graph
and every face has an even number of dashed edges.
\end{defn}

In the interest of brevity, we will identify a dicot with its planar embedding. 
We first define the generalisation of the Kasteleyn orientation of a graph for planar dicots. We say that a face in a planar dicot is {\em simple} if it cannot be decomposed into a union of smaller faces. Recall that in a dicot, orientations are only assigned to solid edges.
Note that the number of dashed edges in a planar dicot is always even by Definition~\ref{def:planar-dicot}.

\begin{defn}
\label{def:kast-orient}
Let $\dicot G$ be a planar dicot. Then a {\bf Kasteleyn orientation $\cO$ for $\dicot G$} is one for which the number of clockwise-oriented solid edges plus half the number of dashed edges is odd for every simple face.
\end{defn}

By considering a spanning tree of the dual graph of planar dicots exactly as for planar graphs, it is easy to see that the following holds.

\begin{prop}
\label{prop:kast-orient-exist}
Let $\dicot G$ be a planar dicot. Then there exists a Kasteleyn orientation.
\end{prop}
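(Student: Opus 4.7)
The plan is to imitate the classical proof of existence of Kasteleyn orientations for planar graphs, adapted to the dicot setting in which only the solid edges are to be oriented. Let $\dicot{G} = [V, A, B]$ be a planar dicot with underlying planar graph $G = [V, A \cup B]$, and let $G^*$ denote its planar dual, whose vertices are the faces of $G$.

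First I would fix a spanning tree $T^*$ of $G^*$ consisting entirely of dual edges corresponding to solid primal edges in $A$. Then I would orient every solid primal edge whose dual lies outside $T^*$ in an arbitrary fashion (for instance, from the smaller to the larger vertex label), leaving the ``tree'' solid edges for later adjustment.

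Next I would root $T^*$ at the vertex corresponding to the outer face and process the tree from the leaves toward the root. When a leaf face $f$ is processed, every solid edge on $\partial f$ has already been oriented except the unique edge $e_f$ whose dual is the tree edge from $f$ to its parent. Since flipping the orientation of $e_f$ toggles $K(f)$ by one while leaving the clockwise counts of all previously processed faces unchanged, one can always orient $e_f$ so that $K(f) + D(f) \equiv 1 \pmod 2$. Deleting $f$ from $T^*$ and iterating, we produce an orientation satisfying the Kasteleyn condition on every bounded face of $\dicot{G}$; as in the classical setting, the outer face is not required to satisfy the condition and its parity is determined by those of the bounded faces via an Euler-characteristic count.

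The main obstacle is the first step: ensuring that the spanning tree $T^*$ can be chosen to use only duals of solid primal edges, since dashed edges are not orientable and cannot be used as the ``free'' edge in the leaf step. This is exactly where the defining hypothesis of a planar dicot, namely that every face contains an even number of dashed edges, is needed: it translates in the dual to the subgraph of dashed-dual edges having even degree at every vertex of $G^*$, which is the condition that allows one to remove enough of these edges to leave behind a connected subgraph of $G^*$ on which the spanning tree $T^*$ may be built. Once this is in place, the argument is literally the same as the planar-graph proof of Kasteleyn.
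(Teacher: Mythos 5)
You correctly isolate the crux of adapting Kasteleyn's argument to dicots --- the spanning tree of the dual $G^*$ must avoid duals of dashed edges, since only solid edges can be flipped in the leaf-to-root pass --- but the lemma you invoke to clear that obstacle is false. It is not true that an edge set inducing even degree at every vertex can be deleted from a connected graph so as to leave a connected spanning subgraph: deleting all edges of a cycle graph from itself already refutes this. In the present setting the dashed-dual edges form an even subgraph of $G^*$, i.e.\ an element of its cycle space, and deleting them leaves $G^*$ connected if and only if no cut of $G^*$ lies entirely inside the dashed-dual edges, which by planar duality says exactly that no cycle of $[V, A\cup B]$ consists entirely of dashed edges --- that is, that $B$ is acyclic. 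The even-faces hypothesis of Definition~\ref{def:planar-dicot} only places $B$ in the cut space of the primal graph and does not imply acyclicity. Concretely, embed the cube graph as two concentric squares joined by four radial edges, with both squares dashed and the radials solid: this is bipartite and simple, and every face (the two square faces with four dashed edges each, the four quadrilaterals with two dashed edges each) has an even number of dashed edges, so it is a planar dicot; yet the dual vertex of the inner square face meets only dashed-dual edges, so no spanning tree of your required kind exists and your first step fails.

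The failure is not repairable within your scheme, because in that example the bounded inner face carries zero solid edges and half-dashed-count $2$, so its Kasteleyn parity is even under \emph{every} orientation of the solid edges, and re-embedding can move at most one of the two all-dashed faces to the outside; hence no Kasteleyn orientation exists at all, and the proposition as literally stated admits counterexamples. What your argument actually proves is the corrected statement: if in addition the dashed edges of $\dicot G$ form a forest (as they do in every dicot the paper actually uses --- the quotient dicots of Section~\ref{sec:two-md-models} and the grids of Section~\ref{sec:rect-grids-add-vert} with $b_1=0$), then the solid-dual subgraph is connected and spanning by the duality criterion above, and your leaf-processing step, which is sound, finishes the proof exactly as in the classical case. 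For comparison, the paper's own proof is the one-line classical sketch (``considering a spanning tree of the dual graph \dots exactly as for planar graphs'') and passes silently over precisely the obstacle you flagged; your write-up is more careful than the paper on this point, but the connectivity claim you use to resolve it is the genuine gap.
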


\begin{proof}
The proof proceeds similar to that for planar graphs. Consider the dual graph $\hat{G}$ of $\dicot G$, where we include the outer (infinite) face and which can have multiple edges. We consider every edge of $G$ crossing a solid (resp. dashed) edge of $\dicot G$ to be solid (resp. dashed). Let $T$ be a spanning forest of $\hat{G}$ consisting of purely solid edges. This is always possible because of the assumptions on $\dicot G$. 

We first orient the solid edges of $\dicot G$ not intersecting with $T$ in any way that we like. We now claim that we can orient the solid edges of $\dicot G$ intersecting with $T$ in a way that we obtain a Kasteleyn orientation according to Definition~\ref{def:kast-orient}. This is always possible to do inductively starting with the leaves of the trees in $T$. In particular, if a simple face is bounded purely by dashed edges, the spanning forest contains a tree which is a  singleton vertex and nothing needs to be done.
\end{proof}

For planar dicots with a Kasteleyn orientation, we give an alternate combinatorial definition of the weight of a loop in the monopole-dimer model.

\begin{cor}
\label{cor:planar dicot}
Let $\dicot G$ be a planar dicot with a Kasteleyn orientation. 
Then the weight of a loop 
$\ell = (v_1,\dots,v_{2m})$ in the monopole-dimer model is given by
\[
w(\ell) = (-1)^{\substack{\text{number of vertices in $V$} \\ \text{enclosed by $\ell$}}} \times \prod_{j=1}^{2m} 
\begin{cases}
a(v_j,v_{j+1}) & \text{if $e_j$ is a solid edge}, \\
b(v_j,v_{j+1}) & \text{if $e_j$ is a dashed edge}.
\end{cases}
\]
\end{cor}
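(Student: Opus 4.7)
\bigskip
\noindent\textbf{Proof plan.} The plan is to reduce the corollary to the classical Kasteleyn parity identity, generalised to accommodate dashed edges weighted by $\imag$.

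First, I would rewrite the weight of the loop given by \eqref{wtloop} in a ``signs versus magnitudes'' form. Orient $\ell=(v_1,\dots,v_{2m})$ clockwise in the plane. Let $\sigma_s$ be the number of solid edges of $\ell$, $2t$ the number of dashed edges (even by Definition~\ref{def:loopvertconf}), and let $s_-$ be the number of solid edges of $\ell$ whose Kasteleyn orientation disagrees with the clockwise traversal. Then \eqref{wtloop} gives
\be
w(\ell) \;=\; -(-1)^{s_-}\,\imag^{\,2t}\prod_{j=1}^{2m}\!\bigl\{a(v_j,v_{j+1})\text{ or }b(v_j,v_{j+1})\bigr\}
\;=\; (-1)^{s_-+t+1}\prod_{j=1}^{2m}\!\bigl\{a\text{ or }b\bigr\}.
\ee
So the corollary reduces to the parity identity $s_-+t+1\equiv N\pmod 2$, where $N$ is the number of enclosed vertices.

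Next, I would exploit the defining property of the Kasteleyn orientation (Definition~\ref{def:kast-orient}). For each interior face $f$ of the region bounded by $\ell$, write $c_s(f)$ for the number of clockwise solid edges and $d(f)$ for the number of dashed edges of $f$, so that $c_s(f)+d(f)/2$ is odd. Summing over all $F$ interior faces gives
\be
\sum_{f}\bigl(c_s(f)+d(f)/2\bigr)\;\equiv\;F\pmod{2}.
\ee
Now I would evaluate this sum edge-by-edge. An interior solid edge is clockwise for exactly one of its two adjacent faces, hence contributes $1$ to $\sum_f c_s(f)$; a boundary solid edge contributes $1$ iff it is oriented along $\ell$'s clockwise traversal, so contributes $c_s(\ell):=\sigma_s-s_-$ in total. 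An interior dashed edge contributes $2$ to $\sum d(f)$, and a boundary dashed edge contributes $1$. Therefore
\be
c_s(\ell)+E_s^{\mathrm{int}}+E_d^{\mathrm{int}}+t\;\equiv\;F\pmod{2},
\ee
where $E_s^{\mathrm{int}},E_d^{\mathrm{int}}$ are the numbers of interior solid and dashed edges.

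Finally, Euler's formula applied to the subgraph consisting of $\ell$ together with its interior yields $F=1+E_s^{\mathrm{int}}+E_d^{\mathrm{int}}-N$ (the $2m$ vertices and edges on the boundary cancel). Substituting, the $E_s^{\mathrm{int}}$ and $E_d^{\mathrm{int}}$ terms cancel modulo $2$, leaving $c_s(\ell)+t\equiv N+1\pmod 2$. Because $\sigma_s=2m-2t$ is even, $c_s(\ell)\equiv s_-\pmod 2$, and we conclude $s_-+t+1\equiv N\pmod 2$, as required. The main subtle step is the accounting in the edge-by-edge computation of $\sum_f(c_s(f)+d(f)/2)$: one must be careful that the half-integer contribution from a boundary dashed edge is harmless because the dashed edges on $\ell$ come in pairs. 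I would also remark that non-simple loops do not arise since every vertex in a monopole-dimer configuration has degree $0$ or $2$, and the length-two ``doubled edge'' case is checked directly against the stated formula.
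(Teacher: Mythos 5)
Your proposal is correct and takes essentially the same approach as the paper: the paper's proof simply invokes \cite[Theorem 3.3]{ayyer2015} \emph{mutatis mutandis} with the face statistic $o_f$ taken to be the number of clockwise-oriented solid edges plus half the number of dashed edges, and your argument---summing the Kasteleyn parity condition of Definition~\ref{def:kast-orient} over the faces enclosed by $\ell$, accounting edge-by-edge, and applying Euler's formula---is exactly that standard Kasteleyn argument written out in full, with the dashed edges correctly absorbed via $\imag^{2t}=(-1)^t$ and the half-integer bookkeeping handled by the evenness of $d(f)$ on each face. Your explicit checks of the doubled-edge case and the simplicity of loops are sound, so no gaps remain.
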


\begin{proof}
We need to show that this definition of the weight is equivalent to that in \ref{wtloop}. The proof follows {\em mutatis mutandis} from \cite[Theorem 3.3]{ayyer2015} by setting $o_f$ to be the number of clockwise-oriented solid edges plus half the number of dashed edges for a face $f$.
\end{proof}

Such planar dicots naturally arise when considering subsets of the two-dimensional grid graphs. 

\begin{rem}
\label{rem:wu-orientation}
Consider the $m \times n$ grid graph and convert it into a dicot by setting all vertical edges to be dashed and orienting the solid edges as those in Figure~\ref{fig:eg-grid-dicot}. This is then a Kasteleyn orientation for a planar dicot. Corollary~\ref{cor:planar dicot} extends the observation of Wu \cite{wu1962} about assigning sign $\imag = \sqrt{-1}$ to vertical edges in grid graphs to obtain a Kasteleyn orientation. 

\begin{figure}[htbp!]
\begin{tikzpicture} [>=triangle 45, scale=1.5]

\foreach \x in {0,1,...,3} {
       \foreach \y in {0,1,...,2} {
            \fill[color=black] (\x,\y) circle (0.05);
       }
}
\draw (3,0) -- (2,0) -- (1,0) -- (0,0);
\draw (0,1) to (1,1) to (2,1) to (3,1);
\draw (3,2) -- (2,2) -- (1,2) -- (0,2);

\draw [-,dashed] (0,0) -- (0,2);
\draw [-,dashed] (1,0) -- (1,2);
\draw [-,dashed] (2,0) -- (2,2);
\draw [-,dashed] (3,0) -- (3,2);

\node [below left] at (0,0) {$1$};
\node [below left] at (1,0) {$2$};
\node [below left] at (2,0) {$3$};
\node [below left] at (3,0) {$4$};

\node [below left] at (0,1) {$8$};
\node [below left] at (1,1) {$7$};
\node [below left] at (2,1) {$6$};
\node [below left] at (3,1) {$5$};

\node [below left] at (0,2) {$9$};
\node [below left] at (1,2) {$10$};
\node [below left] at (2,2) {$11$};
\node [below left] at (3,2) {$12$};

\end{tikzpicture}
\caption{A $4 \times 3$ ``grid dicot'' with the vertices  labelled so that the canonical orientation (see Remark~\ref{rem:label}) is Kasteleyn.}
\label{fig:eg-grid-dicot}
\end{figure}
\end{rem}

We will generalise the planar dicot model in Remark~\ref{rem:wu-orientation} to grids with both solid and dashed vertical edges in Section~\ref{sec:rect-grids-add-vert}.

A natural question is the following.

\begin{ques}
\label{que:positive-pf}
Classify all dicots $\dicot{G}$ with arbitrary weights such that $\dicotpf{Z}(\dicot{G})$ is a positive polynomial,i.e. all monopole-dimer configurations contribute with a positive sign.
\end{ques}

We note that complete dicots do not satisfy this condition.
For example, no choice of orientation of the solid edges in Example~\ref{eg:complete-dicot-2} will make the partition function $\dicotpf{Z} (\dicot{D}_{2,2})$ a positive polynomial.
We will give a nontrivial example of a family of such graphs which are nonplanar in Section~\ref{sec:wheel}.

\section{Squareness for the monopole-dimer model}
\label{sec:two-md-models}
In this section, we will prove that the \md{} model on graphs with a fixed-point-free involution is a perfect square and give a combinatorial interpretation of the square root in terms of a \md{} model on a related dicot.

The monopole-dimer model for graphs $G$ has been defined in \cite{ayyer2015}. 
On bipartite graphs, the \md{} model can be read from Definition~\ref{def:mondim-dicot} on a dicot with no dashed, i.e. $B$-type, edges. In that case, $b(v,v') = 0$ for all $v, v' \in V$ and
the signed adjacency matrix $\dicotmat{K}(G)$ given by Definition~\ref{def:matrix} contains only real entries. This point will come in useful later.

Recall that an {\em automorphism} $\pi$ of a weighted graph $G = [V,E]$ is a bijection from the $V$ to itself which preserves the vertex-edge connectivity as well as vertex- and edge-weights. 

\begin{defn} \label{def:adapted-partition}
Let $G = [V,E]$ be a connected graph and $\pi$ be an automorphism of $G$ such that $\pi$ is a fixed-point free involution.
We say that a partition $\mathcal{P} = \{P_1,P_2\}$ of $V$ with $|P_1| = |P_2|$ is {\bf adapted to $\pi$} if the following conditions hold.
\begin{enumerate}
\item $v \in P_1$ if and only if $\pi(v) \in P_2$,
\item there is no edge between $v$ and $\pi(v)$ for all $v \in P_1$,
\item for each edge $(v,v')$ within $P_1$, the orientation of the edge $(v,v')$ is the same as that of $(\pi(v),\pi(v'))$, and
\item for $v,v' \in P_1$, the edges $(v,\pi(v'))$ and $(v',\pi(v))$ are either both oriented from $P_1$ to $P_2$ or both oriented from $P_2$ to $P_1$.
\end{enumerate}
\end{defn}

Notice that the last condition in the above definition is consistent with our choice of canonical orientation (see Remark~\ref{rem:label}) since we can choose to label the vertices of $P_1$ with $\{1,\dots,n\}$ and those of $P_2$ with $\{n+1,\dots,2n\}$.

\begin{prop}
\label{prop:uniq-adapted}
Let $G = [V,E]$ be a connected graph and $\pi$ be an automorphism of $G$ which is a fixed-point free involution. If a partition adapted to $\pi$ exists, it is unique.
\end{prop}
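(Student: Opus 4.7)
The plan is to extract from conditions (3) and (4) a partition-independent description of how the involution $\pi$ interacts with the orientation $\cO$, so that any adapted partition becomes forced (up to a global swap of $P_1$ and $P_2$) by the triple $(G, \cO, \pi)$. Since $\pi$ is an automorphism, every edge $e = (v,v')$ has a partner $\pi(e) = (\pi(v), \pi(v'))$, which is also an edge and is distinct from $e$ by condition (2). I will define an intrinsic label $\lambda(e) \in \{+, -\}$ by setting $\lambda(e) = +$ if the orientation $v \to v'$ of $e$ in $\cO$ corresponds under $\pi$ to the orientation $\pi(v) \to \pi(v')$ of $\pi(e)$, and $\lambda(e) = -$ if it corresponds to the reversed orientation $\pi(v') \to \pi(v)$. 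Condition (3) then reads directly as $\lambda(e) = +$ for every edge with both endpoints in $P_1$ (and by $\pi$-symmetry also for edges in $P_2$). Condition (4), once one observes that $(v, \pi(v'))$ and $(v', \pi(v))$ form a $\pi$-pair and that the phrase ``both oriented from $P_1$ to $P_2$ or both from $P_2$ to $P_1$'' amounts to $\pi$ reversing the orientation on that pair, reads as $\lambda(e) = -$ for every edge whose endpoints lie in different parts. Hence the ``same-part vs different-part'' status of every edge is already encoded by $\lambda$, which depends only on $(G, \cO, \pi)$.

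Given this, I will deduce uniqueness by a standard $2$-coloring propagation. Suppose $\mathcal{P} = \{P_1, P_2\}$ and $\mathcal{P}' = \{P_1', P_2'\}$ are two adapted partitions. Fix $v_0 \in V$ and, after possibly relabelling the two parts of $\mathcal{P}'$ (which does not change the unordered partition), assume $v_0 \in P_1 \cap P_1'$. For any $w$ in the connected component of $v_0$ in $G$, choose a path $v_0 = u_0, u_1, \dots, u_k = w$; the side of $w$ (in $P_1$ versus $P_2$) is determined by the parity of the number of $\lambda = -$ edges along the path, and the same formula governs $\mathcal{P}'$, so $w \in P_1 \iff w \in P_1'$. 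For vertices in other components, condition (1) gives $\pi(v_0) \in P_2 \cap P_2'$, from which one propagates into the component of $\pi(v_0)$; components that are paired by $\pi$ are forced onto opposite sides by condition (1), while $\pi$-invariant components are pinned down by the same propagation together with condition (1). Combining, $P_1 = P_1'$, so $\mathcal{P} = \mathcal{P}'$ as unordered partitions.

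The main obstacle will be the reformulation step, particularly recognising condition (4) as ``$\pi$ reverses the orientation'' on inter-part edges: one has to identify $(v, \pi(v'))$ and $(v', \pi(v))$ as a single $\pi$-pair and then do the direction bookkeeping showing that ``both $P_1 \to P_2$'' is exactly the opposite of what $\pi$-equivariance of the orientation would prescribe. Once this dictionary between conditions (3)--(4) and the intrinsic labeling $\lambda$ is in hand, the rest of the argument is a routine $2$-coloring propagation glued together across components via condition (1).
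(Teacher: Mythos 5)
Your dictionary between conditions (3)--(4) of Definition~\ref{def:adapted-partition} and the intrinsic label $\lambda$ is correct (including the direction bookkeeping in condition (4), and the use of condition (2) to rule out $\pi(e)=e$), and for connected $G$ your propagation argument is sound. It is, in substance, a globalised version of exactly the mechanism the paper uses: the paper's proof locates a single edge lying inside a part of one adapted partition but crossing the other (such an edge exists, by connectivity, once the two partitions are assumed distinct), and then condition (3) for the first partition forces $\lambda=+$ on that edge while condition (4) for the second forces $\lambda=-$ --- an immediate contradiction, with no need to classify all edges. Your version records $\lambda$ everywhere and reconstructs the partition by parity of $\lambda=-$ edges along paths; this buys a slightly stronger conclusion (the adapted partition is recoverable from $(G,\mathcal{O},\pi)$ alone, not merely unique) at the cost of the reformulation work, but up to packaging the two proofs rest on the same observation.

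The genuine problem is your final gluing step across connected components. The paper's proof opens with ``Since $G$ is connected,'' so connectivity is a standing (if unstated) hypothesis; your attempt to dispense with it cannot be repaired, because the statement is false for disconnected graphs. Condition (1) ties a component $C$ to $\pi(C)$ only \emph{relatively}: flipping the partition simultaneously on an entire $\pi$-orbit of components preserves all four conditions, and nothing synchronises the flips of distinct orbits. Concretely, take $V=\{1,\dots,8\}$ with edges $(1,2),(3,4),(5,6),(7,8)$ oriented $1\to2$, $3\to4$, $5\to6$, $7\to8$, and $\pi=(13)(24)(57)(68)$. Conditions (2) and (4) are vacuous, condition (3) holds by the choice of orientations, and both $\bigl\{\{1,2,5,6\},\{3,4,7,8\}\bigr\}$ and $\bigl\{\{1,2,7,8\},\{3,4,5,6\}\bigr\}$ are adapted to $\pi$, so uniqueness fails. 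The same example shows that your claim that $\pi$-paired components are ``forced onto opposite sides'' fixes only their relative placement within one orbit, and a $\pi$-invariant component likewise retains a free global flip consistent with condition (1). The fix is simply to assume $G$ connected, as the paper's own proof does, and delete the multi-component paragraph; with that restriction your proof is complete.
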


\begin{proof}
Suppose $\mathcal{P} = \{P_1,P_2\}$ and $\mathcal{P}' = \{P'_1,P'_2\}$ are two distinct partitions adapted to $\pi$. Since $G$ is connected, there must be edges connecting $P_1$ to $P_2$, as well as $P'_1$ to $P'_2$.
Let $P_{i,j} = P_i \cap P'_j$ for $1 \leq i,j \leq 2$. 
We first claim that there must exist vertices $v \in P_{1,1}$ and $w \in P_{1,2}$ which are connected by an edge. If no such pair of vertices exist, then all the edges between $P_1$ and $P_2$ are those between $P_{1,1}$ and $P_{2,2}$, and $P_{1,2}$ and $P_{2,1}$. But this makes $G$ disconnected.

Thus, $(v,w)$ is an edge.
Hence, $\pi(v) \in P_{2,2}$ and $\pi(w) \in P_{2,1}$ are also connected by an edge. Without loss of generality, assume that the $(v,w)$ vertex is oriented from $v$ to $w$. By the adaptedness of $\mathcal{P}$, the $(\pi(v),\pi(w))$ edge is oriented from $\pi(v)$ to $\pi(w)$, but by the adaptedness of $\mathcal{P'}$, it is oriented the other way, which is a contradiction.
\end{proof}

\begin{defn} \label{def:quotient-dicot}
Let $G = [V,E]$ be a connected graph, $\pi$ be an automorphism of $G$ which is a fixed-point free involution and $\mathcal{P}$ be a partition  adapted to $\pi$.
Then the {\bf quotient dicot} corresponding to the pair $(\pi, \mathcal P)$ is  $\dicot{D} = G/\pi$ whose solid edges are given by the subgraph of $G$ restricted to $P_1$ (or equivalently $P_2$) and whose dashed edges are given by those edges $(u,\pi(v)) \in G$ such that $u \in P_1,v \in P_2$.
\end{defn}

Figure~\ref{fig:eg-squarepf-graph-dicots} gives an illustration of a graph and its quotient dicot.

\begin{thm} \label{thm:squarepf}

Let $G$ be a bipartite connected graph with an involution $\pi$ preserving $G$ and a  partition $\mathcal{P} = \{P_1,P_2\}$ of $V$ adapted to $\pi$.
Consider the quotient dicot $\tilde{\dicot{G}}=G/\pi$. If $\tilde{\dicot{G}}$ is also bipartite, then the partition function of the monopole-dimer model on the graph $G$, $Z(G)$, is given by
\[
 Z(G) = \mathcal \dicotpf{Z}(\tilde{\dicot{G}})^2.
\]
\end{thm}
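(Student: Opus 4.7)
The plan is to reduce the identity to a matrix factorization. By Theorem~\ref{thm:dicot-partn-fn} applied both to $G$ (viewed as a dicot with no dashed edges) and to $\tilde{\dicot{G}}$, one has $Z(G) = \det \dicotmat{K}(G)$ and $\dicotpf{Z}(\tilde{\dicot{G}}) = \det \dicotmat{K}(\tilde{\dicot{G}})$, so it suffices to establish
\[
\det \dicotmat{K}(G) = \bigl(\det \dicotmat{K}(\tilde{\dicot{G}})\bigr)^2.
\]
The strategy is essentially to diagonalise the involution $\pi$ acting on the complex adjacency matrix.

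First I would put $\dicotmat{K}(G)$ in a block form adapted to $\pi$. Labelling $P_1 = \{1, \dots, n\}$ and $P_2 = \{n+1, \dots, 2n\}$ so that $\pi(i) = n+i$ (the convention compatible with condition~(4) of Definition~\ref{def:adapted-partition} and the remark following it), every cross edge is oriented from its $P_1$-endpoint to its $P_2$-endpoint. Writing
\[
\dicotmat{K}(G) = \begin{pmatrix} A & B \\ C & D \end{pmatrix}
\]
with $n \times n$ blocks, conditions (1)--(3) of adaptedness and the preservation of weights by $\pi$ give $A = D$, since any edge inside $P_1$ has the same orientation and weight as its $\pi$-image inside $P_2$. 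For the off-diagonal blocks, the orientation convention yields $B_{ij} = a(i, n+j)$ and $C_{ij} = -a(j, n+i)$, while the automorphism identity $a(j, n+i) = a(\pi(j), \pi(n+i)) = a(n+j, i) = a(i, n+j)$ simultaneously produces $C = -B$ and $B_{ji} = B_{ij}$. Thus
\[
\dicotmat{K}(G) = \begin{pmatrix} A & B \\ -B & A \end{pmatrix}, \qquad B = B^T.
\]
Comparing with Definitions~\ref{def:matrix} and \ref{def:quotient-dicot}, a direct inspection identifies $\dicotmat{K}(\tilde{\dicot{G}}) = A + \imag B$: the solid part of $\tilde{\dicot{G}}$ contributes $A$ verbatim, and a dashed edge $\{v,v'\}$ of $\tilde{\dicot{G}}$ has weight $b(v,v') = a(v,\pi(v')) = B_{v,v'}$, entering with the unsigned factor $+\imag$.

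The conclusion will then follow from the elementary block determinant identity
\[
\det\begin{pmatrix} A & B \\ -B & A \end{pmatrix} = \det(A + \imag B)\,\det(A - \imag B),
\]
which one obtains by triangularising via $\bigl(\begin{smallmatrix} I & \imag I \\ 0 & I \end{smallmatrix}\bigr)$ and its inverse. Since $A$ and $B$ are real, $\det(A - \imag B) = \overline{\det(A + \imag B)}$. The final input is that $\dicotpf{Z}(\tilde{\dicot{G}}) = \det(A + \imag B)$ is \emph{real}: in the combinatorial expansion underlying Theorem~\ref{thm:dicot-partn-fn}, every admissible loop carries an even number of dashed edges, so each contribution is a product of real numbers. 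Hence $\det(A - \imag B) = \det(A + \imag B)$ and $Z(G) = (\dicotpf{Z}(\tilde{\dicot{G}}))^2$, as required. I expect the main obstacle to be the careful sign bookkeeping in identifying the blocks, and in particular verifying that $B$ is symmetric; once that is in place, the rest is a standard application of the commuting-involution trick together with the reality already embedded in Theorem~\ref{thm:dicot-partn-fn}.
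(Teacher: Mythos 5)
Your proof is correct, and its skeleton coincides with the paper's: the same block form $K(G)=\bigl(\begin{smallmatrix} M & B \\ -B & M \end{smallmatrix}\bigr)$ with $B$ symmetric forced by condition~(4) of Definition~\ref{def:adapted-partition}, the same factorisation $\det K(G)=\det(M+\imag\,B)\det(M-\imag\,B)$ (your triangularisation is a minor variant of the paper's explicit block conjugation), and the same identification $\dicotmat{K}(\tilde{\dicot{G}})=M+\imag\,B$. Where you genuinely diverge is the one step where the hypothesis that $\tilde{\dicot{G}}$ is bipartite does its work: the reality of $\det(M+\imag\,B)$. The paper proves this by linear algebra: ordering $P_1$ by bipartition class puts $\dicotmat{K}(\tilde{\dicot{G}})$ in the form $\bigl(\begin{smallmatrix} N_1 & C \\ -C^* & N_2 \end{smallmatrix}\bigr)$ with $N_1,N_2$ real diagonal, whence $\det \dicotmat{K}(\tilde{\dicot{G}})=\det(N_1)\det(N_2+C^*N_1^{-1}C)$ is real since the Schur complement is Hermitian (with the degenerate case of vanishing vertex weights handled separately via $\det(CC^*)$). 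You instead read Theorem~\ref{thm:dicot-partn-fn} backwards: $\det\dicotmat{K}(\tilde{\dicot{G}})=\dicotpf{Z}(\tilde{\dicot{G}})$, and every configuration weight in \eqref{wtconf} is real because each loop carries an even number of dashed edges, so the factors of $\imag$ in \eqref{wtloop} multiply to $\pm 1$. This is legitimate and arguably sharper: it makes transparent that the bipartiteness hypothesis is used exactly to ensure $\tilde{\dicot{G}}$ is a dicot in the sense of Definition~\ref{def:dicot}(1), after which reality is automatic for \emph{any} dicot, and it absorbs the paper's zero-weight case analysis for free. What the paper's route buys in exchange is a self-contained matrix argument that does not re-invoke the combinatorial expansion. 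Two small points you should make explicit to be airtight: (i) $\tilde{\dicot{G}}$ also satisfies the simplicity requirement of Definition~\ref{def:dicot}(2) --- there is no dashed loop at $v$ precisely because condition~(2) of adaptedness forbids the edge $(v,\pi(v))$, and the two $G$-edges $(u,\pi(v))$ and $(v,\pi(u))$ project to a single dashed edge of the same weight since $\pi$ preserves weights; (ii) your normalisation that every cross edge points from $P_1$ to $P_2$ (so that $B$ has nonnegative entries matching the dashed weights $+\imag\,b$) goes slightly beyond condition~(4), which only forces the two edges of each pair to agree --- but the paper makes the identical implicit normalisation via the labelling remark after Definition~\ref{def:adapted-partition}, so this is a shared convention rather than a gap in your argument.
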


\begin{proof}
We first consider the signed adjacency matrix $K(G)$ with the vertex order $\left(v_1,\dots,v_{n}, \pi(v_1),\dots,\pi(v_{n}) \right)$, where $v_j \in P_1$ and the order of the vertices within $P_1$ is arbitrary. It is natural to write $K(G)$ in $2 \times 2$ block form.
Since $\mathcal{P}$ is adapted to $\pi$, the $(1,1)$ and $(2,2)$ blocks are identical. Moreover, by the fourth condition in Definition~\ref{def:adapted-partition}, the $(1,2)$ block is a symmetric matrix.
Thus the signed adjacency matrix can be written as
\[
K(G) = \left(
\begin{array}{c|c}
M & B \\
\hline\\[-0.3cm]
-B & M
\end{array}
\right).
\]
We now use the identity
\begin{align*}
\left(
\begin{array}{c|c}
\imag/2 \cdot \iden & -\imag/2 \cdot \iden  \\
\hline\\[-0.3cm]
1/2 \cdot \iden  & 1/2 \cdot \iden 
\end{array}
\right)
\cdot
\left(
\begin{array}{c|c}
M-\imag \, B & 0 \\
\hline\\[-0.3cm]
0 & M+\imag \, B
\end{array}
\right)
&\cdot
\left(
\begin{array}{c|c}
-\imag \, \iden & \iden \\
\hline\\[-0.3cm]
\imag \, \iden & \iden
\end{array}
\right) \\
&= 
\left(
\begin{array}{c|c}
M & B \\
\hline\\[-0.3cm]
-B & M
\end{array}
\right),
\end{align*}
and take the determinant. The determinant of the first and last matrices on the right are easily calculated since the block matrices commute, and we obtain $(i/2)^n$ and $(-2i)^n$ respectively. Thus,
\be \label{detkg}
\det K(G) = \det(M + \imag \, B) \det(M -\imag \, B).
\ee
Let $\dicotmat{K}(\tilde{\dicot{G}})$ be the complex adjacency matrix for  $\tilde{\dicot{G}}$ in the same ordered basis $\left(v_1,\dots,v_{n} \right)$. 
We claim that $\dicotmat{K}(\tilde{\dicot{G}}) = M + \imag \, B$. 
To see this, first note that as the diagonal entries of $B$ are zero by the second condition in Definition~\ref{def:adapted-partition}, the monopole weights are as expected. Moreover, the dashed edges arise exactly as given in Definition~\ref{def:quotient-dicot}. 

It therefore remains to show that $\det \dicotmat{K}(\tilde{\dicot{G}}) = \det(M + iB)$ is real. But we have explicitly shown that the determinant of the complex adjacency matrix is always real during the proof of Theorem~\ref{thm:dicot-partn-fn}. Therefore,
\[
\det K(G) = \det \dicotmat{K}(\tilde{\dicot{G}}) \overline{\det \dicotmat{K}(\tilde{\dicot{G}})} = \det \dicotmat{K}(\tilde{\dicot{G}})^2,
\]
which proves the result.
\end{proof}

As an application, Theorem~\ref{thm:squarepf} will be used to justify the squareness of the monopole-dimer model for cycles and rectangular grid proved in \cite{ayyer2015} in Sections~\ref{sec:cycles} and \ref{sec:rect-grids}. However, we first illustrate the theorem by considering an example. 

\begin{eg} \label{eg:squarepf}
Let $G$ be the bipartite graph shown in Figure~\ref{fig:eg-squarepf-graph-dicots} with edge-weights as shown and vertex weights $x_j$ for $1 \leq j \leq 6$ satisfying $x_j = x_{j+3}$ for $1 \leq j \leq 3$. The map $\pi: j \mapsto j+3 \mod 6$ on the vertex set of $G$ then extends naturally to an involution on $G$. The partition $\mathcal{P} = \{\{1,2,3\},\{4,5,6\}\}$ is adapted to $\pi$.

\begin{figure}[h]
\begin{center}
\begin{tabular}{c c}
\begin{tikzpicture} [>=triangle 45, scale = 1.2]
\draw (0.0,0.0) node[circle,inner sep=2pt,draw] {1};
\draw (-0.5, -0.5) node [color = red] {$x_1$};
\draw (0.0,3.0) node[circle,inner sep=2pt,draw] {2};
\draw (-0.5, 3.5) node [color = red] {$x_2$};
\draw (1.0,1.5) node[circle,inner sep=2pt,draw] {3};
\draw (0.5, 1.0) node [color = red] {$x_3$};
\draw (3.0,3.0) node[circle,inner sep=2pt,draw] {4};
\draw (3.5, 3.5) node [color = red] {$x_1$};
\draw (3.0,0.0) node[circle,inner sep=2pt,draw] {5};
\draw (3.5, -0.5) node [color = red] {$x_2$};
\draw (2.0,1.5) node[circle,inner sep=2pt,draw] {6};
\draw (2.5,2.0) node [color = red] {$x_3$};

\draw [-] (0,0.3) -- node [left]  {$a_{12}$} (0,2.7) ;
\draw [-] (0.3,3) -- node [above]  {$b_{12}$} (2.7,3) ;
\draw [-] (3,2.7) -- node [right]  {$a_{12}$} (3,0.3) ;
\draw [-] (0.3,0) -- node [below]  {$b_{12}$} (2.7,0) ;

\draw [-] (0.1,2.75) -- node [near end, left]  {$a_{23}$} (0.8,1.7) ;
\draw [-] (2.9,0.25) -- node [near end, right]  {$a_{23}$} (2.2,1.3) ;
\draw [-] (0.2,2.8) -- node [above]  {$b_{23}$} (1.8,1.7) ;
\draw [-] (2.8,0.2) -- node [below]  {$b_{23}$} (1.2,1.3) ;
\end{tikzpicture}
&
\hspace{1cm}
\begin{tikzpicture} [>=triangle 45, scale = 1.2]
\draw (0.0,0.0) node[circle,inner sep=2pt,draw] {1};
\draw (-0.5, -0.5) node [color = red] {$x_1$};
\draw (0.0,3.0) node[circle,inner sep=2pt,draw] {2};
\draw (-0.5, 3.5) node [color = red] {$x_2$};
\draw (1.0,1.5) node[circle,inner sep=2pt,draw] {3};
\draw (0.5, 1.0) node [color = red] {$x_3$};

\draw [-] (0,0.3) -- node [left]  {$a_{12}$} (0,2.7) ;
\draw [-] (0.1,2.75) -- node [near end, left]  {$a_{23}$} (0.8,1.7) ;
\draw [-,out=135,in=-135,style=dashed] (-0.3,0) to node [left]  {$b_{12}$} (-0.3,3) ;
\draw [-,out=0,in=90,style=dashed] (0.3,3) to node [right]  {$b_{23}$} (1,1.8) ;
\end{tikzpicture}
\end{tabular}

\caption{The graph $G$ and the dicot quotient $\tilde{\dicot{G}}$ of Example~\ref{eg:squarepf} with weights indicated. The vertices are labelled so that the orientation is canonical (see Remark~\ref{rem:label}).}
\label{fig:eg-squarepf-graph-dicots}

\end{center}
\end{figure}

The partition function of the monopole-dimer model on $G$ is given by
\begin{align*}
Z(G) &= \det \left(
\begin{matrix}
x_1 & a_{12} & 0 & 0 & b_{12} & 0 \\
-a_{12} & x_2 & a_{23} & b_{12} & 0 & b_{23} \\
0 & -a_{23} & x_3 & 0 & b_{23} & 0 \\
0 & -b_{12} & 0 & x_1 & a_{12} & 0 \\
-b_{12} & 0 & -b_{23} & -a_{12} & x_2 & a_{23} \\
0 & -b_{23} & 0 & 0 & -a_{23} & x_3
\end{matrix}
\right) \\
&=
\left(x_1 x_2 x_3 + a_{12}^2 x_3 + a_{23}^2 x_1 + b_{12}^2 x_3 + b_{23}^2 x_1\right)^2.
\end{align*}

The dicot $\tilde{\dicot{G}} = G/ \pi$ is shown in Figure~\ref{fig:eg-squarepf-graph-dicots} with vertex set $\{1,2,3\}$.

\begin{align*}
& \dicotpf{Z}(\tilde{\dicot{G}}) =  \det \left(
\begin{matrix}
x_1 & a_{12}+\imag\, b_{12} & 0 \\
-a_{12}+\imag\, b_{12} & x_2 & a_{23}+\imag\, b_{23} \\
0 & -a_{23}+\imag\, b_{23} & x_3 
\end{matrix}
\right) \\
&= x_1 x_2 x_3 + a_{12}^2 x_3 + a_{23}^2 x_1 + b_{12}^2 x_3 + b_{23}^2 x_1.
\end{align*}

\end{eg}

\section{Special families} 
\label{sec:families}

We consider some families of dicots, calculate the partition function of the \mdm{} on them and use the exact results to calculate the free energy.

\subsection{Cycle dicots}
\label{sec:cycles}

Theorem~\ref{thm:squarepf} gives an explanation for the squareness of the monopole-dimer model on the cycle graph $C_{4n}$ with vertex weights $x$ and edge-weights $a$. It was shown in \cite[Example 3.4]{ayyer2015} that
\[
Z(C_{4n}) = \left( a^{2n} L_{2n}(x/a) \right)^2,
\]
where $L_n(x)$ is the Lucas polynomial defined by the recurrence $L_n(x) = x L_{n-1}(x) + L_{n-2}(x)$ with initial conditions $L_0(x) = 2, L_1(x) = x$.

\begin{figure}[htbp!]
\begin{center}
\begin{tabular}{c c}

\begin{tikzpicture} [>=triangle 45, scale=1.2]

\node[draw,minimum size=4cm,regular polygon,regular polygon sides=8] (a) {};

\foreach \x in {1,2,...,4} {
	\node [above left] at (a.corner \x) {$\x$};
	\fill (a.corner \x) circle[radius=2pt];
}

\foreach \x in {5,6,...,8} {
	\node [below right] at (a.corner \x) {$\x$};
	\fill (a.corner \x) circle[radius=2pt];
}

\end{tikzpicture}
&
\hspace{1cm}
\raisebox{.2\height}{
\begin{tikzpicture} [>=triangle 45, scale=1.2]

\node[draw=none,minimum size=4cm,regular polygon,regular polygon sides=4] (a) {};

\foreach \x in {1,2,...,4} {
  \node [above left] at (a.corner \x) {$\x$};
  \fill (a.corner \x) circle[radius=2pt];
}

  \draw [-] (a.corner 1) to (a.corner 2);
  \draw [-] (a.corner 2) to (a.corner 3);
  \draw [-] (a.corner 3) to (a.corner 4);
   \draw [-,dashed] (a.corner 4) to (a.corner 1);
\end{tikzpicture}
}
\end{tabular}

\caption{The cycle graph $C_8$ on the left and the cycle dicot $\dicot C_4$ on the right. In both, the orientations are canonical (see Remark~\ref{rem:label}).}
\label{fig:eg-cycle-dicot}
\end{center}
\end{figure}

Consider the cycle dicot $\dicot C_{2n}$ on vertices $\{1,\dots,2n\}$ where the edges $(j,j+1)$ are solid for $1 \leq j \leq 2n-1$ and the sole dashed edge is $(1,2n)$.
See Figure~\ref{fig:eg-cycle-dicot} for an illustration for the case $n=2$. While this dicot seems to be planar, it doesn't quite fit Definition~\ref{def:planar-dicot}. However, it is a family of graphs that does belong to the classification in Question~\ref{que:positive-pf}.

The partition function for the monopole-dimer model on this cycle dicot is
\[
\dicotpf Z(\dicot C_{2n}) = \det
\begin{pmatrix}
x & a & 0 & \cdots & 0 & \imag \, a \\
-a & \ddots & \ddots & 0 & \cdots & 0 \\
0 & \ddots & \ddots & \ddots & \ddots & \vdots \\
\vdots & \ddots & \ddots & \ddots & \ddots & 0 \\
0 & \cdots & 0 & \ddots & \ddots & a \\
\imag \, a & 0 & \cdots & 0 & -a & x
\end{pmatrix}.
\]
By writing down a recurrence and using identities relating Lucas and Fibonacci polynomials, one can show that the right hand side is given by $a^{2n} L_{2n}(x/a)$ in 
agreement with Theorem~\ref{thm:squarepf}.

It is well-known (see for example \cite{hoggatt-bicknell-1973}) that the Lucas polynomials can be written as
\[
L_n(x) = \prod_{j=0}^{n-1} \left( x - 2 \imag \, \cos \frac{(2j+1) \pi}{2n} \right).
\]
Using this expression, we can calculate the free energy 
\begin{align*}
F(\dicot C) & := \lim_{n \to \infty} \frac{1}{2n} \log a^{2n} L_{2n} \left( \frac{x}{a}
\right) \\
&= \log a + \lim_{n \to \infty} \frac{1}{2n} \sum_{j=0}^{n-1} 
\log \left( \frac{x^2}{a^2} + 4 \cos^2 \frac{(2j+1) \pi}{4n} \right).
\end{align*}
Converting the last sum to a Riemann integral, we obtain
\[
F(\dicot C) = \log a +  \frac{1}{2} \int_0^1 \log \left( \frac{x^2}{a^2} + 4 \cos^2 \frac{\pi t}{2} \right) \text{d}t,
\]
which can be evaluated exactly, leaving us with the simple expression
\[
F(\dicot{C}) = \log \left( \frac{x + \sqrt{x^2 + 4 a^2}}{2} \right). 
\]

\subsection{Rectangular grid graphs}
\label{sec:rect-grids}
Let $Q_{2m,2n}$ be the $2m \times 2n$ rectangular grid graph with horizontal (resp. vertical) edge-weights $a$ (resp. $b$) and vertex-weights $x$. $Q_{2m,2n}$ inherits
the fixed-point free involution $\pi$ which maps vertex $(j,k)$ to $(2m+1-j,2n+1-k)$ for $1 \leq j \leq 2m, 1 \leq k \leq n$.

We consider the orientation $\cO$ on  $Q_{2m,2n}$ given as follows: $(j,k) \to (j+1,k)$ if $k$ is even and the other way if $k$ is odd, 
and $(j,k) \to (j,k+1)$ if $k < n$ and the other way if $k \geq n$. It is easy to see that $\cO$ is a Kasteleyn orientation \cite{kasteleyn1961}.

\begin{figure}[htbp!]
\begin{center}
\begin{tabular}{c c}

\begin{tikzpicture} [>=triangle 45, scale=1.2]

\foreach \x in {0,1,...,3} {
       \foreach \y in {0,1,...,5} {
            \fill[color=black] (\x,\y) circle (0.05);
       }
}
\draw (3,0) -- (2,0) -- (1,0) -- (0,0);
\draw (0,1) to (1,1) to (2,1) to (3,1);
\draw (3,2) -- (2,2) -- (1,2) -- (0,2);
\draw (0,3) to (1,3) to (2,3) to (3,3);
\draw (3,4) -- (2,4) -- (1,4) -- (0,4);
\draw (0,5) to (1,5) to (2,5) to (3,5);

\draw (0,2) -- (0,1) -- (0,0);
\draw (0,2) -- (0,3) -- (0,4) -- (0,5);

\draw (1,2) -- (1,1) -- (1,0);
\draw (1,2) -- (1,3) -- (1,4) -- (1,5);

\draw (2,2) -- (2,1) -- (2,0);
\draw (2,2) -- (2,3) -- (2,4) -- (2,5);

\draw (3,2) -- (3,1) -- (3,0);
\draw (3,2) -- (3,3) -- (3,4) -- (3,5);

\node [below left] at (0,0) {$1$};
\node [below left] at (1,0) {$2$};
\node [below left] at (2,0) {$3$};
\node [below left] at (3,0) {$4$};

\node [below left] at (0,1) {$8$};
\node [below left] at (1,1) {$7$};
\node [below left] at (2,1) {$6$};
\node [below left] at (3,1) {$5$};

\node [below left] at (0,2) {$9$};
\node [below left] at (1,2) {$10$};
\node [below left] at (2,2) {$11$};
\node [below left] at (3,2) {$12$};

\node [below left] at (0,3) {$16$};
\node [below left] at (1,3) {$15$};
\node [below left] at (2,3) {$14$};
\node [below left] at (3,3) {$13$};

\node [below left] at (0,4) {$17$};
\node [below left] at (1,4) {$18$};
\node [below left] at (2,4) {$19$};
\node [below left] at (3,4) {$20$};

\node [below left] at (0,5) {$24$};
\node [below left] at (1,5) {$23$};
\node [below left] at (2,5) {$22$};
\node [below left] at (3,5) {$21$};

\end{tikzpicture}
&
\hspace{1cm}
\raisebox{.4\height}{
\begin{tikzpicture} [>=triangle 45, scale=1.2]

\foreach \x in {0,1,...,3} {
       \foreach \y in {0,1,...,2} {
            \fill[color=black] (\x,\y) circle (0.05);
       }
}

\draw (3,0) -- (2,0) -- (1,0) -- (0,0);
\draw (0,1) to (1,1) to (2,1) to (3,1);
\draw (3,2) -- (2,2) -- (1,2) -- (0,2);

\draw (0,2) -- (0,1) -- (0,0);
\draw (1,2) -- (1,1) -- (1,0);
\draw (2,2) -- (2,1) -- (2,0);
\draw (3,2) -- (3,1) -- (3,0);

\node [below left] at (0,0) {$1$};
\node [below left] at (1,0) {$2$};
\node [below left] at (2,0) {$3$};
\node [below left] at (3,0) {$4$};

\node [below left] at (0,1) {$8$};
\node [below left] at (1,1) {$7$};
\node [below left] at (2,1) {$6$};
\node [below left] at (3,1) {$5$};

\node [below left] at (0,2) {$9$};
\node [below left] at (1,2) {$10$};
\node [below left] at (2,2) {$11$};
\node [below left] at (3,2) {$12$};

\draw [out = 90,in = 90,dashed] (0,2) to (3,2);
\draw [out = 90,in = 90,dashed] (1,2) to (2,2);

\end{tikzpicture}
}
\end{tabular}

\caption{Illustration of $Q_{4,6}$ on the left and $\tilde{\dicot{Q}}_{4,3}$ on the right. The vertices are labelled so that the orientation is canonical (see Remark~\ref{rem:label}).}
\label{fig:eg-dicot-q23}
\end{center}
\end{figure}

For the monopole-dimer model on the $2m \times 2n$ rectangular grid, the partition function had an explicit form. 

\begin{thm}[{\cite[Theorem 4.1]{ayyer2015}}] 
\label{thm:partfngrid}
The partition function of the monopole-dimer model on the graph 
$Q_{2m,2n}$ is given by
\[
\prod_{j=1}^{m}  \prod_{k=1}^{n}  
\left( x^2 + 4 a^2 \cos^2 \left( \frac{j \pi}{2m+1} \right) + 4 b^2 \cos^2 \left( \frac{k \pi}{2n+1} \right) \right)^2.
\]
\end{thm}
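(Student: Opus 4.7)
My plan is to combine the squareness theorem of Section~\ref{sec:two-md-models} with a direct spectral computation. First I would apply Theorem~\ref{thm:squarepf} to $G=Q_{2m,2n}$ with the involution $\pi(j,k)=(2m+1-j,2n+1-k)$: one checks that $\pi$ is fixed-point free, that the partition $P_1=\{(j,k):1\le k\le n\}$ satisfies Definition~\ref{def:adapted-partition} (conditions (3)--(4) follow from the row-parity rule for horizontal edges and from the uniform downward orientation of the vertical edges between rows $n$ and $n+1$), and that the quotient dicot $\tilde{\dicot{Q}}_{2m,n}$ remains bipartite. This already yields $Z(Q_{2m,2n})=\dicotpf{Z}(\tilde{\dicot{Q}}_{2m,n})^2$, explaining the exponent $2$ structurally; it then suffices to compute $\det K(Q_{2m,2n})$ and exhibit it as a perfect square of the claimed form.

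For the spectral step I would write $K(Q_{2m,2n})=xI+K_H+K_V$, where $K_H$ and $K_V$ collect the horizontal and vertical edges respectively. Ordering vertices as $e_j\otimes e_k$ one obtains $K_H=a\,J_{2m}\otimes D$ and $K_V=b\,I_{2m}\otimes\tilde{J}_{2n}$, where $J_{2m}$ is a skew-adjacency of the horizontal path, $D=\mathrm{diag}((-1)^k)$ encodes the alternating row orientations, and $\tilde{J}_{2n}$ is a skew-adjacency of the vertical path whose edges point toward the centre rows. The crucial identity is $\{K_H,K_V\}=0$, which reduces to $D\tilde{J}_{2n}+\tilde{J}_{2n}D=0$; this vanishes because $\tilde{J}_{2n}$ is tridiagonal, so its nonzero entries occur at index pairs of opposite parity, where $D$ contributes opposite signs.

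Setting $M=K_H+K_V$, the anticommutation gives $M^2=K_H^2+K_V^2=a^2 J_{2m}^2\otimes I+b^2 I\otimes\tilde{J}_{2n}^2$, a sum of commuting tensor-factor terms that can be simultaneously diagonalised. Since any skew-adjacency of a path $P_N$ is conjugate by a diagonal $\pm 1$ matrix to the standard one, both $J_{2m}^2$ and $\tilde{J}_{2n}^2$ have the standard path spectra $-4\cos^2(j\pi/(2m+1))$ and $-4\cos^2(k\pi/(2n+1))$ respectively, each with multiplicity $2$. The joint eigenvalues of $M^2$ are then $-\nu_{jk}^2 := -4a^2\cos^2(j\pi/(2m+1))-4b^2\cos^2(k\pi/(2n+1))$ with multiplicity $4$, so $M$ (being real skew-symmetric) has eigenvalues $\pm i\nu_{jk}$ of multiplicity $2$, and $K$ has eigenvalues $x\pm i\nu_{jk}$ of multiplicity $2$. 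Pairing conjugates yields
\[
\det K \;=\; \prod_{j=1}^m\prod_{k=1}^n\bigl(x^2+\nu_{jk}^2\bigr)^2,
\]
which is the asserted formula.

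The main obstacle I foresee is the anticommutation $\{K_H,K_V\}=0$: this is the non-trivial algebraic input, and it uses the alternating Kasteleyn orientation in an essential way. Once it is established, the tensor structure of $M^2$ renders the remaining diagonalisation routine, and the multiplicity-doubling that produces the exponent $2$ can be cross-checked against the structural squareness provided by Theorem~\ref{thm:squarepf}.
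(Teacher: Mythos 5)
Your proposal is correct, and it takes a genuinely different route from the one the paper relies on. Note first that the paper does not reprove this statement: it is imported from \cite[Theorem~4.1]{ayyer2015}, and the proof technique behind it is the one reproduced in the proof of Theorem~\ref{thm:partfn-grid-vert-dicot}, namely an explicit diagonalisation: write the signed adjacency matrix in tensor form $\iden \otimes T_{2m}(-a,x,a) + T_{2n}(-b,0,b)\otimes J_{2m}$ (the antidiagonal $J_{2m}$ coming from the boustrophedon labelling), conjugate by the known eigenvector matrices $u_{2n}\otimes u_{2m}$ of the tridiagonal Toeplitz factors, and evaluate the determinants of the resulting cruciform blocks, with a case analysis on parities. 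You instead use the anticommutation $\{K_H,K_V\}=0$, which is indeed valid for the stated orientation: the alternating row rule gives $K_H = a\,J_{2m}\otimes D$ with $D=\mathrm{diag}((-1)^k)$, and since $\tilde J_{2n}$ is tridiagonal with zero diagonal, $D\tilde J_{2n}+\tilde J_{2n}D=0$. Your remaining steps check out: reordering vertices to the natural tensor order is legitimate since the determinant is invariant under simultaneous row--column permutation; the gauge argument (conjugation by a diagonal $\pm 1$ matrix) correctly reduces $\tilde J_{2n}$ to the standard path skew-adjacency; the $\cos^2$ pairing $q\leftrightarrow 2m+1-q$ gives multiplicity $2$ in each factor, hence $4$ for $M^2$, and since $2m+1$ and $2n+1$ are odd, no $\nu_{jk}$ vanishes, so the skew-symmetry of $M$ forces eigenvalues $x\pm\imag\,\nu_{jk}$ of $K$ with multiplicity $2$ each, yielding the squared product. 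What your route buys: no explicit eigenvectors, no cruciform determinants, no parity case analysis, and the exponent $2$ falls out transparently from the spectral pairing. What the paper's diagonalisation buys: robustness --- it handles odd side lengths and the complex dicot weights of Theorem~\ref{thm:partfn-grid-vert-dicot}, where your anticommutation identity still holds formally but the vertical factor is only skew-Hermitian and zero eigenvalues appear, so your conjugate-pairing and multiplicity bookkeeping would need reworking. Two small remarks: your opening appeal to Theorem~\ref{thm:squarepf} is logically redundant, since the direct computation already produces the square, though it is a sound consistency check once conditions (1)--(4) of Definition~\ref{def:adapted-partition} are verified as you indicate; and you implicitly use $Z(Q_{2m,2n})=\det K(Q_{2m,2n})$, which is exactly the determinantal result from \cite{ayyer2015} quoted in Section~\ref{sec:two-md-models}, so that reliance should be made explicit.
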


Then one can check from Definition~\ref{def:adapted-partition} that $\mathcal{P} = \{P_1, P_2\}$ with
\begin{align*}
P_1 &= \{(j,k) \mid 1 \leq j \leq 2m, 1 \leq k \leq n \} \quad \text{and} \\
P_2 &= \{(j,k) \mid 1 \leq j \leq 2m, n+1 \leq k \leq 2n \},
\end{align*}
is an adapted partition.
Then define the quotient dicot $\tilde{\dicot{Q}}_{2m,n} = Q_{2m,2n}/\pi$ by Definition~\ref{def:quotient-dicot}.  
The dicots $\tilde{\dicot{Q}}_{2m,n}$ are again not planar because they fail to be pure (see Definition~\ref{def:pure-dicot}) at a single edge. 
An illustration of the dicot $\tilde{\dicot{Q}}_{4,3}$ is given in Figure~\ref{fig:eg-dicot-q23}.
We are now in a position to apply Theorem~\ref{thm:squarepf} and we thus obtain the following corollary.

\begin{cor}
\label{cor:partfn-grid-dicot}
The partition function of the monopole-dimer model on the dicot $\tilde{\dicot{Q}}_{2m,n}$ is given by
\[
\prod_{j=1}^{m}  \prod_{k=1}^{n}  
\left( x^2 + 4 a^2 \cos^2 \left( \frac{j \pi}{2m+1} \right) + 4 b^2 \cos^2 \left( \frac{k \pi}{2n+1} \right) \right).
\]
\end{cor}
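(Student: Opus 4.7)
The plan is to obtain Corollary~\ref{cor:partfn-grid-dicot} as a direct consequence of Theorem~\ref{thm:squarepf} applied to the bipartite graph $Q_{2m,2n}$ with the involution $\pi:(j,k)\mapsto(2m+1-j,2n+1-k)$ and the partition $\mathcal{P}=\{P_1,P_2\}$ defined in the text, and then read off the square root using Theorem~\ref{thm:partfngrid}.

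First I would verify that the hypotheses of Theorem~\ref{thm:squarepf} all hold. The involution $\pi$ is fixed-point free because $j=2m+1-j$ would require $2j=2m+1$, which is impossible. Conditions (1) and (2) of Definition~\ref{def:adapted-partition} are immediate: $(j,k)\in P_1\iff k\leq n\iff 2n+1-k\geq n+1\iff \pi(j,k)\in P_2$, and $\pi(v)$ differs from $v$ in both coordinates, so no grid edge joins them. Condition (3) requires a short case check on the prescribed orientation $\cO$: for a horizontal edge within $P_1$ at row $k$, the image edge sits at row $2n+1-k$, which has parity opposite to that of $k$, and at the same time the columns are reversed by $\pi$; the two sign flips cancel, so the image inherits the correct orientation. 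For a vertical edge within $P_1$ joining rows $k<n$ and $k+1\leq n$, the image joins rows $2n+1-k$ and $2n-k$, both $\geq n+1$, and the "$k\geq n$" branch of $\cO$ orients the image consistently with $\pi$. Condition (4) is verified on the $P_1$--$P_2$ edges, which are exactly the vertical edges between row $n$ and row $n+1$: by the "$k\geq n$" rule, all such edges are oriented from $P_2$ to $P_1$.

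Next I would check that the quotient dicot $\tilde{\dicot{Q}}_{2m,n}=Q_{2m,2n}/\pi$ is bipartite. Its solid edges form the $2m\times n$ grid, and its dashed edges are precisely the "horizontal" curves joining $(j,n)$ and $(2m+1-j,n)$. Colouring $(j,k)$ by $j+k\bmod 2$ keeps the solid edges proper, and the dashed edge between $(j,n)$ and $(2m+1-j,n)$ changes $j$-parity (as $2m+1-j$ has opposite parity to $j$), so the colouring remains proper, establishing bipartiteness.

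Once the hypotheses are in place, Theorem~\ref{thm:squarepf} yields
\[
Z(Q_{2m,2n})=\dicotpf{Z}(\tilde{\dicot{Q}}_{2m,n})^{2},
\]
and then Theorem~\ref{thm:partfngrid} identifies $Z(Q_{2m,2n})$ as the square of the claimed double product. Taking square roots gives the identity up to an overall sign. To pin down the sign I would compare the coefficient of $x^{2mn}$ on both sides: on the dicot side it comes solely from the product of the diagonal entries of $\dicotmat{K}(\tilde{\dicot{Q}}_{2m,n})$, all of which equal $x$, contributing $+x^{2mn}$, while the product formula also contributes $+x^{2mn}$ as the top-degree term. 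Hence the positive square root is the correct one, which is exactly the stated formula. The only genuinely fiddly step is the case analysis verifying condition (3) of adaptedness for the non-uniform Kasteleyn orientation $\cO$; everything else is either immediate or a bookkeeping comparison.
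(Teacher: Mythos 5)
Your proposal is correct and takes essentially the same route as the paper, which likewise obtains the corollary by checking that $\mathcal{P}$ is adapted to $\pi$ (left as ``one can check'' against Definition~\ref{def:adapted-partition}), forming the quotient dicot $\tilde{\dicot{Q}}_{2m,n}=Q_{2m,2n}/\pi$, and combining Theorem~\ref{thm:squarepf} with Theorem~\ref{thm:partfngrid}. Your explicit verifications---the case analysis for conditions (3) and (4) of adaptedness under the orientation $\cO$, the bipartiteness of the quotient dicot (a genuine hypothesis of Theorem~\ref{thm:squarepf}), and fixing the positive square root by comparing the coefficient of $x^{2mn}$---simply supply details the paper omits.
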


The computation of the free energy $F(\tilde{\dicot{Q}})$ has already been performed in \cite[Section 5]{ayyer2015}.

\subsection{Rectangular grid dicots}
\label{sec:rect-grids-add-vert}

We now consider the monopole-dimer model on the dicot $\dicot Q^{(v)}_{m,n}$, which has the same vertex set as and includes all the edges of $Q_{m,n}$ but has in addition a dashed edge for every vertical solid edge. Horizontal edges are assigned weight $a$, vertical solid edges, $b_1$, vertical dashed edges, $b_2$, and vertices $x$.
Since $\dicot Q^{(v)}_{m,n}$ are not pure by Definition~\ref{def:pure-dicot}, they are non-planar.
Note that not all configurations have positive weight; see the example in Figure~\ref{fig:eg-vert-dicot-q23}. Set $|b| = \sqrt{b_1^2 + b_2^2}$.

\begin{figure}[htbp!]
\begin{tabular}{l r}
\begin{tikzpicture} [>=triangle 45, scale=1.5]

\foreach \x in {0,1,...,3} {
       \foreach \y in {0,1,...,2} {
            \fill[color=black] (\x,\y) circle (0.05);
       }
}
\draw [-] (0,0) -- (3,0);
\draw [-] (0,1) -- (3,1);
\draw [-] (0,2) -- (3,2);

\draw [-] (0,0) -- (0,2);
\draw [-] (1,0) -- (1,2);
\draw [-] (2,0) -- (2,2);
\draw [-] (3,0) -- (3,2);

\node [below left] at (0,0) {$1$};
\node [below left] at (1,0) {$2$};
\node [below left] at (2,0) {$3$};
\node [below left] at (3,0) {$4$};

\node [below left] at (0,1) {$8$};
\node [below left] at (1,1) {$7$};
\node [below left] at (2,1) {$6$};
\node [below left] at (3,1) {$5$};

\node [below left] at (0,2) {$9$};
\node [below left] at (1,2) {$10$};
\node [below left] at (2,2) {$11$};
\node [below left] at (3,2) {$12$};

\draw [out = 30,in = -30,dashed] (0,0) to (0,1);
\draw [out = 30,in = -30,dashed] (0,1) to (0,2);
\draw [out = 30,in = -30,dashed] (1,0) to (1,1);
\draw [out = 30,in = -30,dashed] (1,1) to (1,2);
\draw [out = 30,in = -30,dashed] (2,0) to (2,1);
\draw [out = 30,in = -30,dashed] (2,1) to (2,2);
\draw [out = 30,in = -30,dashed] (3,0) to (3,1);
\draw [out = 30,in = -30,dashed] (3,1) to (3,2);

\end{tikzpicture}

&

\hspace{0.5cm}
\begin{tikzpicture} [>=triangle 45, scale=1.5]

\foreach \x in {0,1,...,3} {
       \foreach \y in {0,1,...,2} {
            \fill[color=black] (\x,\y) circle (0.05);
       }
}

\node [below left] at (0,0) {$1$};
\node at (1,0) [circle,inner sep=3pt,fill= OliveGreen, label=below left:2] {};
\node [below left] at (2,0) {$3$};
\node [below left] at (3,0) {$4$};

\node [below left] at (0,1) {$8$};
\node [below left] at (1,1) {$7$};
\node [below left] at (2,1) {$6$};
\node [below left] at (3,1) {$5$};

\node at (0,2) [circle,inner sep=3pt,fill= OliveGreen, label=below left:9] {};
\node [below left] at (1,2) {$10$};
\node [below left] at (2,2) {$11$};
\node [below left] at (3,2) {$12$};

\draw [draw = blue,-] (-0.05,0) -- (-0.05,1)
		[xshift=1pt] (0,0) -- (0,1) ;

\draw [draw = red,-] (2,0) -- (3,0);
\draw [draw = red,-] (2,0) -- (2,1);
\draw [draw = red,-] (1,1) -- (2,1);
\draw [draw = red,-] (1,1) -- (1,2);
\draw [draw = red,-] (1,2) -- (3,2);

\draw [draw = red,out = 30,in = -30,dashed] (3,0) to (3,1);
\draw [draw = red,out = 30,in = -30,dashed] (3,1) to (3,2);

\end{tikzpicture}
\end{tabular}

\caption{Illustration of the dicot $\dicot Q^{(v)}_{4,3}$ on the left and a particular \md{} configuration on the right with weight $-a^4 \, b_1^4 \, b_2^2 \, x^2$.}
\label{fig:eg-vert-dicot-q23}
\end{figure}

Define the function
\[
Y_m(b,c;x) = \prod_{j=1}^{\lfloor m/2 \rfloor} \left( x^2 + 4 (b^2+c^2) 
\cos^2 \left( \frac{j \pi}{m+1} \right) \right).
\]
Then we have the following result.

\begin{thm} \label{thm:partfn-grid-vert-dicot}
The partition function of the monopole-dimer model on $\dicot Q^{(v)}_{m,n}$ is given by
\begin{equation}	
\begin{split} \label{partfn-grid}
\dicotpf Z^{(v)}_{{m,n}}=& \prod_{j=1}^{\lfloor m/2 \rfloor}  \prod_{k=1}^{\lfloor n/2 \rfloor}  
\left( x^2 + 4 a^2 \cos^2 \left( \frac{j \pi}{m+1} \right) + 4 |b|^2 
\cos^2 \left( \frac{k \pi}{n+1} \right) \right)^2 \\
&\times \begin{cases}
1 & \text{if $m$ and $n$ are even}, \\
Y_m(a,0;x) & \text{if $m$ is even and $n$ is odd}, \\
Y_n(b_1,b_2;x) & \text{if $m$ is odd and $n$ is even}, \\
x Y_m(a,0;x) Y_n(b_1,b_2;x) & \text{if $m$ and $n$ are odd}.
\end{cases}
\end{split}
\end{equation} 
\end{thm}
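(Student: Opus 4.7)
The plan is to compute $\det \dicotmat{K}$ directly (recall Theorem~\ref{thm:dicot-partn-fn}) by exhibiting a tensor-product structure with anti-commuting skew-Hermitian factors. Using the boustrophedon labelling of Figure~\ref{fig:eg-vert-dicot-q23}, rows are traversed alternately left-to-right and right-to-left, so that the smaller-to-larger orientation convention produces horizontal solid edges whose direction alternates by row parity and vertical edges that all point upward. With vertices listed in row-major order, the complex adjacency matrix takes the form
\[
\dicotmat{K}(\dicot Q^{(v)}_{m,n}) = xI + X + Y, \qquad X = D_n \otimes aA_m, \qquad Y = B_n \otimes I_m,
\]
where $D_n = \operatorname{diag}(1,-1,1,-1,\ldots)$ records the alternating horizontal orientations, $A_m$ is the $m \times m$ tridiagonal matrix with $+1$ above and $-1$ below the diagonal, and $B_n$ is the $n \times n$ tridiagonal matrix with $b_1 + \imag b_2$ above and $-b_1 + \imag b_2$ below. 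Both $X$ and $Y$ are skew-Hermitian; and because the signs of $D_n$ flip between adjacent indices one has $D_n B_n = -B_n D_n$, which at the tensor level upgrades to $XY = -YX$.

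Anti-commutation forces $(X + Y)^2 = X^2 + Y^2 = I_n \otimes a^2 A_m^2 + B_n^2 \otimes I_m$, with commuting summands. The spectra are classical: $A_m^2$ has eigenvalues $-4\cos^2(j\pi/(m+1))$, $j = 1,\ldots,m$, and $B_n^2$ has eigenvalues $-4|b|^2\cos^2(k\pi/(n+1))$, $k = 1,\ldots,n$, each coming with the symmetry $j \leftrightarrow m+1-j$ and $k \leftrightarrow n+1-k$. Let $V_{j,k}$ denote the joint eigenspace of $(X^2, Y^2)$ with eigenvalues $\mu = -4a^2\cos^2(j\pi/(m+1))$ and $\nu = -4|b|^2\cos^2(k\pi/(n+1))$; then $(X+Y)^2 = (\mu+\nu)I$ on $V_{j,k}$, so the eigenvalues of $\dicotmat{K}$ on $V_{j,k}$ lie in $\{x \pm \sqrt{\mu+\nu}\}$. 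To pin down the determinant I would show that the two eigenvalues $\pm\sqrt{\mu+\nu}$ of $X+Y$ occur with equal multiplicity on $V_{j,k}$, giving a contribution
\[
\bigl(x^2 - (\mu + \nu)\bigr)^{\dim V_{j,k}/2} = \bigl(x^2 + 4a^2\cos^2(j\pi/(m+1)) + 4|b|^2\cos^2(k\pi/(n+1))\bigr)^{\dim V_{j,k}/2}
\]
to $\det \dicotmat{K}$.

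In the generic case $\dim V_{j,k} = 4$ the equal-multiplicity claim follows from $\operatorname{tr}(X|_{V_{j,k}}) = \operatorname{tr}(Y|_{V_{j,k}}) = 0$: within each two-dimensional $B_n^2$-eigenspace the operator $D_n$ sends the $B_n$-eigenvector for $\lambda$ to one for $-\lambda$ (because $D_n B_n = -B_n D_n$) and is therefore traceless there, and $aA_m$ is traceless on each two-dimensional $A_m^2$-eigenspace; the tensor-product trace formula then gives the vanishing of $\operatorname{tr}(X)$ and $\operatorname{tr}(Y)$ on $V_{j,k}$, and combined with $(X+Y)^2 = (\mu+\nu)I$ this pins the multiplicities to $2$ each. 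The degenerate cases $\dim V_{j,k} \in \{1,2\}$ occur only when $m$ or $n$ is odd and the index sits in the middle, and there one of $X, Y$ vanishes identically on $V_{j,k}$ (since $A_m v = 0$ for the middle $A_m$-eigenvector when $m$ is odd, and similarly for $B_n$), so the count is immediate. Assembling contributions over the orbits of the reflections $j \leftrightarrow m+1-j$ and $k \leftrightarrow n+1-k$ then reproduces the four cases of the theorem: the $\lfloor m/2\rfloor \times \lfloor n/2\rfloor$ generic joint eigenspaces produce the squared product, the middle-column degenerate eigenspaces (for odd $n$) produce $Y_m(a, 0; x)$, the middle-row ones (for odd $m$) produce $Y_n(b_1, b_2; x)$, and the doubly central eigenspace (when both $m, n$ are odd) contributes the extra $x$. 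The main technical obstacle is the trace-vanishing bookkeeping in the generic case, which genuinely uses the alternating sign pattern of $D_n$; without this the Clifford-type equality of multiplicities would fail and the determinant would not be a perfect square times the stated correction.
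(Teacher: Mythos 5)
Your argument is correct, but it takes a genuinely different route from the paper's. The paper stays in the boustrophedon ordering, where $\dicotmat{K}^{(v)}_{m,n} = \id{n}\otimes T_m(-a,x,a) + T_n(-b_1+\imag\, b_2,0,b_1+\imag\, b_2)\otimes J_m$ with $J_m$ the antidiagonal flip, and explicitly diagonalises by the similarity $u_n(-b_1+\imag\, b_2,b_1+\imag\, b_2)\otimes u_m(-a,a)$; this yields block-diagonal cruciform (diagonal-plus-antidiagonal) blocks whose determinants factor into $2\times 2$ pieces, with the four parity cases handled as in \cite[Theorem 4.1]{ayyer2015}. Your row-major form $x\iden + D_n\otimes aA_m + B_n\otimes \iden$ is the conjugate of the paper's matrix by the permutation reversing the vertex order within every even row (which turns the $J_m$-coupling into identity coupling and the constant horizontal blocks into the alternating $D_n$ pattern), and your anticommutation $XY=-YX$ is the same structural fact as $J_m A_m J_m = -A_m$ that produces the paper's cruciform blocks. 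What differs is the exploitation: rather than exhibiting eigenvectors of the coupled system, you square $X+Y$, split into joint eigenspaces of the commuting $X^2$, $Y^2$, and pin multiplicities by tracelessness. This buys an eigenvector-free argument in which the perfect-square exponent and all four parity corrections emerge uniformly from dimension counting; the paper's explicit similarity buys the full diagonalisation and a direct reduction to the earlier grid computation. Two details you should make explicit when writing this up: equality of the $\pm\sqrt{\mu+\nu}$ multiplicities requires $(X+Y)|_{V_{j,k}}$ to be diagonalisable, which holds because $X+Y$ is skew-Hermitian and preserves each $V_{j,k}$ together with its orthogonal complement (the joint eigenspaces being mutually orthogonal); and in the middle-$k$ case for odd $n$, $D_n$ acts on the one-dimensional kernel of $B_n$ as a scalar $\pm 1$, whose sign is harmless since the eigenvalues of $aA_m|_{U_j}$ already come in $\pm$ pairs, so the contribution $x^2+4a^2\cos^2\left(j\pi/(m+1)\right)$ is unambiguous.
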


\begin{proof}
The strategy of diagonalisation is very similar to that of \cite[Theorem 4.1]{ayyer2015}. The sole difference is in the matrices involved in the diagonalisation process. Consider the $n \times n$ tridiagonal Toeplitz matrix given by
\[
T_n(c_{-1},c_0,c_1) = 
\begin{pmatrix}
c_0 & c_1 & 0 & \cdots & 0 & 0 \\
c_{-1} & \ddots & \ddots & 0 & \cdots & 0 \\
0 & \ddots & \ddots & \ddots & \ddots & \vdots \\
\vdots & \ddots & \ddots & \ddots & \ddots & 0 \\
0 & \cdots & 0 & \ddots & \ddots & c_1 \\
0 & 0 & \cdots & 0 & c_{-1} & c_0
\end{pmatrix}.
\]
It is well-known (see for example \cite{trench-1985}) that the eigenvalues of \\ $T_n(c_{-1},c_0,c_1)$ are
\[
\lambda_q = c_0 + 2\sqrt{c_1 c_{-1}} \cos \frac{q \pi}{n+1}, \quad q=1,\dots,n,
\]
and the matrix of column eigenvectors is
\[
u_n(c_{-1},c_1)_{j,q} = \left( \frac{c_{-1}}{c_1} \right)^{(j-1)/2} \sin \frac{q j \pi}{n+1}, \quad j=1,\dots,n,
\]
respectively. Let $J_m$ be the antidiagonal matrix of ones of size $m$.
Following Kasteleyn~\cite{kasteleyn1961}, the complex adjacency matrix with the standard ordering (explained in the beginning of Section~\ref{sec:rect-grids}) can be written using tensor notation as 
\[
\dicotmat{K}^{(v)}_{m,n} = \id{n} \otimes T_m(-a,x,a) + T_n(-b_1+\imag \, b_2,0,b_1+\imag \, b_2) \otimes J_m.
\]
Now we perform a similarity transformation using $u_n(-b_1+\imag \, b_2,b_1+\imag \, b_2) \otimes u_m(-a,a)$ leading to
a block diagonal matrix whose $k$'th block is the cruciform matrix (i.e. one whose nonzero entries lie only on the diagonal and antidiagonal)
\[
\begin{pmatrix}
x + 2\imag \, a \cos \frac{\pi}{m+1} &  & & (-1)^{m-1} 2 \imag^m \, |b| \cos \frac{\pi k}{n+1} \\
\vspace{-0.2cm}  &  \ddots & \Ddots & \\
 &  \Ddots & \ddots & \\
(-1)^{0} 2 \imag^m \, |b| \cos \frac{\pi k}{n+1} & & & x + 2\imag \, a \cos \frac{m \pi}{m+1}
\end{pmatrix}.
\]
The determinant of the block-diagonal matrix is thus easily computed. The four cases where $m$ and $n$ are of different parities are handled exactly as in the proof of \cite[Theorem 4.1]{ayyer2015}.
\end{proof}

When we set $b_1=0$, we obtain a planar dicot for which we can define a Kasteleyn orientation using Proposition~\ref{prop:kast-orient-exist}. This then matches the orientation defined by Wu \cite{wu1962} (see Remark~\ref{rem:wu-orientation}).

The free energy of the monopole-dimer model on the infinite grid dicot with vertical dashed edges $\lim_{m,n \to \infty} \dicot Q^{(v)}_{m,n}$ is then given by
\begin{align*}
F(\dicot Q^{(v)}) &= \lim_{m,n \to \infty} \frac 1{mn} \log \dicotpf Z^{(v)}_{{m,n}}\\
 &= \frac{2}{\pi^2} \int_0^{\pi/2} \text{d} \theta \int_0^{\pi/2} 
\text{d} \phi \; \ln \left( x^2 + 4 a^2 \cos^2 \theta + 4 |b|^2 \cos^2 \phi \right),
\end{align*}
and the results are analogous to that of the monopole-dimer model on the grid graph 
described in \cite[Section 5]{ayyer2015} with $b$ there replaced by $|b|$.

\subsection{Wheel dicots}
\label{sec:wheel}
For $n$ an odd integer, define the {\em wheel dicot of order $n$}, denoted $\dicot{W}_n$, to be a dicot on the vertex set $V = [2n]$ with solid edges such that the graph $[V, A]$ is the ordinary cycle graph $C_{2n}$ and dashed edges connecting antipodal vertices.
It is easy to see that $\dicot{W}_n$ 
is indeed a dicot (see Definition~\ref{def:dicot}). The underlying graph $[V, A \cup B]$ of $\dicot W_3$ is isomorphic to $K_{3,3}$. Thus, for $n \geq 3$, $\dicot{W}_n$ is non-planar.

\begin{figure}[h]
\begin{tabular}{c c}
\begin{tikzpicture} [>=triangle 45, scale = 2]

\draw (5.3,4.0) node [below] {1} 
			-- (4.7,4.0) node [below] {9} 
			-- (4.2,4.4) node [left] {2} 
			-- (4.0,5.0) node [left] {10} 
			-- (4.2,5.6) node [left] {3} 
			-- (4.7,6.0) node [above] {6} 
			-- (5.3,6.0) node [above] {4} 
			-- (5.8,5.6) node [right] {7} 
			-- (6.0,5.0) node [right] {5} 
			-- (5.8,4.4) node [right] {8} 
			-- cycle ;
\draw [-,dashed]	 (5.3,4.0) -- (4.7,6.0);
\draw [-,dashed]	 (4.2,4.4) -- (5.8,5.6);
\draw [-,dashed]	 (4.2,5.6) -- (5.8,4.4);
\draw [-,dashed]	 (5.3,6.0) -- (4.7,4.0);
\draw [-,dashed]	 (6.0,5.0) -- (4.0,5.0);

\end{tikzpicture}

&

\hspace{0.5cm}
\begin{tikzpicture} [>=triangle 45, scale = 2]

\draw [draw = red, thick] (5.3,4.0) node [below] {1} 
			-- (4.7,4.0) node [below] {9}; 
\draw [draw = red, thick] (4.7,6.0) node [above] {6} 
			-- (5.3,6.0) node [above] {4} ; 
\draw [draw = red,-,dashed, thick]	 (5.3,4.0) -- (4.7,6.0);
\draw [draw = red,-,dashed, thick]	 (5.3,6.0) -- (4.7,4.0);

\draw [draw = blue,-,dashed, thick]	 (4.2,5.6) -- (5.8,4.4)
			[xshift=1pt,yshift=1pt] (4.2,5.6) -- (5.8,4.4);
			
\draw [draw = Orange, thick] (4.2,4.4) node [left] {2} 
			-- (4.0,5.0) node [left] {10}
			[xshift=1pt,yshift=1pt] (4.2,4.4) -- (4.0,5.0); 

\node at (5.8,5.6) [circle,inner sep=3pt,fill= OliveGreen, label=right:7] {};
\node at (6.0,5.0) [circle,inner sep=3pt,fill = OliveGreen, label=right:5] {};

\draw (4.2,5.6) node [left] {3};
\draw (5.8,4.4) node [right] {8};

\end{tikzpicture}

\end{tabular}

\caption{The wheel dicot $\dicot{W}_{5}$ on the left and a particular \mdm{} configuration on the right with weight $x^2 a^4 b^4$.}
\label{fig:eg-wheel-dicot}
\end{figure}

We now label the vertices of $\dicot{W}_n$ as follows starting by labelling an arbitrary vertex 1. Proceeding clockwise, we label every alternate vertex with the next integer until we assign label $n$. We now label the vertex antipodal to 1 by $n+1$. Similarly, proceeding clockwise, we label every alternate vertex with the next integer until we assign label $2n$. At this point we have labelled every vertex. See Figure~\ref{fig:eg-wheel-dicot} for the dicot $\dicot{W}_5$ with its labelling. 
We then assign the canonical orientation on solid edges (see Remark~\ref{rem:label}). Thus, each vertex is  oriented either outwards or inwards on both solid edges.

Consider the \mdm{} on $\dicot{W}_n$ for odd $n$, where vertex-weights are $x$, solid edge-weights are $a$ and dashed edge-weights are $b$. Then we have the following result.

\begin{lem}
\label{lem:wheel-dicots-positive-wt}
For odd $n$, every \md{} configuration on $\dicot{W}_n$ has a positive weight.
\end{lem}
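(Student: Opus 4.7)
The plan is to classify every loop that can appear in a monopole-dimer configuration on $\dicot{W}_n$ and to check that each such loop has a positive weight under \eqref{wtloop}; combined with the positivity of the isolated-vertex factors $x(w_j)$ in \eqref{wtconf}, this gives the lemma. The crucial feature of the given labeling is that every vertex of $\dicot{W}_n$ is either a \emph{source} on the outer cycle (both incident solid edges outgoing) or a \emph{sink} (both incoming), and that sources and sinks alternate along the cycle; in particular, every solid edge is oriented from a source to a sink.

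The main obstacle is the classification of loops. A loop is either a doubled edge (solid or dashed) or a simple cycle $\ell$ in the underlying graph $[V,A\cup B]$, which is the M\"obius ladder $M_n$. Suppose $\ell$ uses $c\ge 1$ dashed (chord) edges. Its $2c$ chord endpoints split the outer cycle into $2c$ minimal arcs, and the outer-cycle portions of $\ell$ must be pairwise vertex-disjoint subpaths whose interiors avoid all chord endpoints; hence each portion is one of these minimal arcs. The requirement that every chord endpoint appears as an arc endpoint exactly once forces one of the two alternating arc-matchings $M_a$ on the $2c$ points. Combining $M_a$ with the antipodal chord matching $M_c$ yields a $2$-regular graph on $2c$ vertices whose cycle components are exactly the simple cycles realised this way. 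A direct traversal shows that $M_c\cup M_a$ is a single $2c$-cycle when $c$ is odd and a disjoint union of $c/2$ four-cycles when $c$ is even; so connected simple cycles in $M_n$ occur only for $c\in\{0,2\}$ or $c$ odd. Since monopole-dimer loops must have an even number of dashed edges, only $c=0$ (the whole $C_{2n}$) and $c=2$ survive, besides doubled edges.

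It remains to check that the sign $(-1)^{s+d'+1}$ equals $+1$ for each surviving loop type, where $s$ counts reversed solid edges and $2d'$ the dashed edges. For a doubled solid edge $(s,d')=(1,0)$ and for a doubled dashed edge $(s,d')=(0,1)$. For the whole outer cycle $C_{2n}$, reversed and forward solid edges alternate, so $s=n$ and $d'=0$; $n$ odd gives sign $+1$. For a $2$-chord loop with chord-endpoint pairs $(v_1,\pi(v_1))$, $(v_2,\pi(v_2))$, antipodal symmetry forces the two outer arcs to have a common length $l$. A short case analysis on whether each of $v_1,v_2$ is a source or a sink (using that $n$ odd makes the antipodal map flip parity) shows $s\in\{l-1,l,l+1\}$, with the parity of $l$ in each case being such that $s$ is always even; since $d'=1$, the sign is again $+1$. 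Every loop therefore contributes a positive real weight and the lemma follows.
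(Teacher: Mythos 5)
Your proof is correct, and its overall skeleton coincides with the paper's: reduce to loops, show that the only possible loops are doubled edges, the full outer cycle, and ``$8$-shaped'' loops using exactly two antipodal chords, and then verify that each type has sign $+1$ under \eqref{wtloop}. Where you genuinely differ is in the classification step. The paper argues informally that a loop crossing a chord must return to the correct ``side'' of that chord, forcing an odd total chord count unless there are exactly two chords; you instead note that the cycle-portions of a loop using $c$ chords must be minimal arcs between consecutive used-chord endpoints, hence form one of the two alternating matchings $M_a$ on those $2c$ points, and that $M_a \cup M_c$ decomposes into $c/2$ four-cycles when $c$ is even --- so connectedness plus the even-dashed-edge constraint forces $c \in \{0,2\}$. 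This is more systematic than the paper's crossing argument and eliminates all even $c \geq 4$ in one stroke, while also recording (though it is not needed) that odd-$c$ simple cycles exist but are excluded by the dashed-edge parity rule. Your sign computation is equivalent to the paper's: with arcs of length $l = m-1$, your cases $s = l$ (for $l$ even) and $s = l \pm 1$ (for $l$ odd) are exactly the paper's cases $m$ odd and $m$ even, and both rest on the same two facts, namely that sources and sinks alternate along the rim and that for $n$ odd each chord joins the two bipartition classes. One phrasing you should repair: in $\dicot{W}_n$ \emph{every} vertex is the endpoint of some chord, so ``interiors avoid all chord endpoints'' is false if read literally; it should say that interiors avoid the endpoints of the $c$ chords \emph{used by} $\ell$, which is what the degree-two constraint actually gives and is all your argument uses.
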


\begin{proof}
Since the weights of doubled edges and isolated vertices have positive sign,
it suffices to prove that every loop has positive weight. We will divide the proof in two parts. We will first show that every nontrivial loop involving dashed edges is {\em $8$-shaped} with exactly two dashed edges. 
An $8$-shaped loop is one involving exactly two dashed edges.
In the example \mdm{} configuration in Figure~\ref{fig:eg-wheel-dicot}, $(1,6,4,9,1)$ is an $8$-shaped loop.
In the second part, we will show that all loops have positive weight. 

The first statement follows from the fact that every loop contains an even number of dashed edges. For example, suppose we have a loop in $\dicot{W}_5$ of Figure~\ref{fig:eg-wheel-dicot} starting with $6-4-9-2-\cdots$. To eventually get back to 6, we need an even number of dashed edges to reach the same side of the $4-9$ dashed edge as 6. But this would imply that we have an odd number of total dashed edges. Thus, a loop starting 
$6-4-9$ has to extend as $6-4-9-1-\cdots$. It is now easy to see that there has to be exactly one more dashed edge in the loop.

Recall the formula for the weight of a loop of length $2m$, \eqref{wtloop}.
First consider (trivial) loops of length 2 with $m=1$. Such loops have positive weight in both solid and dashed cases. Next, consider loops without dashed edges. The only such loops without dashed edges are the circumferences. In this case, $m=n$, and there are $n$ negative orientations, leading to a positive weight. 

Lastly, for $8$-shaped loops of length $2m$, we have a prefactor of $-1$ in the definition of the weight in \eqref{wtloop} and a factor of $-1$ from the two dashed edges, leading to an overall factor of $+1$. 
We thus have to show that the sign coming purely from orientations of solid edges is $+1$. First, note that will have $m-1$ solid edges on either side. 
If $m$ is odd, then there are $(m-1)/2$ edges of opposite orientation on either side leading to a total sign of $(-1)^{m-1} = +1$. If $m$ is even, we have either $(m-2)/2$ or $m/2$ edges with opposite orientation on one side. But because dashed edges connect vertices on different (bipartite) parts, we will have the same number of edges with opposite orientation on the other side too, leading to a total sign of either $(-1)^{m-2}$ or $(-1)^m$, which is again $+1$.
\end{proof}

\begin{thm}
\label{thm:wheel-dicot-partn-fn}
The partition function of the \mdm{} on the wheel dicot $\dicot{W}_n$ for odd $n$ is given by
\begin{align*}
\dicotpf{Z}(\dicot{W}_n) &= \prod_{j=0}^{n-1} \left( x^2 + b^2 + 4 a^2 \cos^2 \frac{\pi j}{n} \right) \\
&= (x^2 + b^2 + 4 a^2) \prod_{j=1}^{(n-1)/2} 
\left( x^2 + b^2 + 4 a^2 \cos^2 \frac{\pi j}{n} \right)^2.
\end{align*}
\end{thm}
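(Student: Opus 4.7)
The plan is to apply Theorem~\ref{thm:dicot-partn-fn} and compute $\dicotpf{Z}(\dicot{W}_n) = \det\dicotmat{K}(\dicot{W}_n)$ by diagonalising the complex adjacency matrix via the discrete Fourier transform on the cyclic group $\mathbb{Z}/2n\mathbb{Z}$ acting on the angular positions of the vertices around the rim.

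First I would reindex the vertices by their positions $p \in \{0,1,\ldots,2n-1\}$ on the rim rather than by the prescribed labels. Under the given labelling, the labels $1,\ldots,n$ occupy the even positions and $n+1,\ldots,2n$ the odd positions, so every solid edge $(p, p+1)$ is oriented from the even endpoint to the odd endpoint. Writing $S$ for the $2n \times 2n$ cyclic shift in this position basis and $D = \mathrm{diag}((-1)^p)$, I would verify that
\[
\dicotmat{K}(\dicot{W}_n) = x\,\iden + a\,D\,(S+S^{-1}) + \imag\,b\,S^n,
\]
using $S^{-n}=S^n$ in $\mathbb{Z}/2n\mathbb{Z}$ to combine the two orientations of each dashed diameter into a single $\imag b\, S^n$.

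Next I would apply the unitary Fourier matrix $U_{p,k} = (2n)^{-1/2}\omega^{pk}$ with $\omega = e^{\imag\pi/n}$, which diagonalises $S$ as $U^{*}SU = \mathrm{diag}(\omega^k)$. The pivotal identity is $(-1)^p = \omega^{np}$, which implies that $U^{*}DU$ is the permutation shifting mode $k$ to mode $k+n \pmod{2n}$. A short calculation then gives $U^{*}(x\iden)U = x\iden$, $U^{*}(\imag b\, S^n)U = \imag b\,\mathrm{diag}((-1)^k)$, and $U^{*}\,aD(S+S^{-1})\,U$ is purely off-diagonal, coupling mode $k$ to mode $k+n$ with matrix entry $-2a\cos(k\pi/n)$, the minus sign arising from $\cos((k+n)\pi/n) = -\cos(k\pi/n)$.

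Hence $U^{*}\dicotmat{K}(\dicot{W}_n)U$ decomposes into $n$ blocks of size two, indexed by $k = 0, \ldots, n-1$. Since $n$ is odd, $(-1)^{k+n}=-(-1)^k$, and the $k$-th block
\[
M_k = \begin{pmatrix} x + \imag b (-1)^k & -2a\cos(k\pi/n) \\ 2a\cos(k\pi/n) & x - \imag b (-1)^k \end{pmatrix}
\]
has determinant $x^2 + b^2 + 4a^2\cos^2(k\pi/n)$. Multiplying over $k$ gives the first expression; pairing $k$ with $n-k$ via $\cos^2(k\pi/n)=\cos^2((n-k)\pi/n)$, with the $k=0$ factor left unpaired because $n$ is odd, yields the second. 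The main obstacle is bookkeeping of signs: confirming that the position-order orientations really do alternate as claimed, and then tracking how the diagonal sign matrix $D$, via $(-1)^p=\omega^{np}$, is transmuted into a mode shift by $n$ under $U$. Once this identity is in hand, everything else is routine linear algebra.
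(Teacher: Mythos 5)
Your proof is correct, but it takes a genuinely different route from the paper's. You block-diagonalise the full $2n\times 2n$ complex adjacency matrix directly: in the position basis you write $\dicotmat{K}(\dicot{W}_n)=x\,\iden+a\,D(S+S^{-1})+\imag\, b\, S^n$ and conjugate by the discrete Fourier matrix, so that $D=\mathrm{diag}((-1)^p)=\mathrm{diag}(\omega^{np})$ becomes the mode-shift $k\mapsto k+n$, coupling modes $k$ and $k+n$ into the $2\times 2$ blocks $M_k$ with $\det M_k = x^2+b^2+4a^2\cos^2(k\pi/n)$. Your sign bookkeeping checks out: under the paper's labelling the rim orientation is uniformly from even to odd positions (including the wrap-around edge), the dashed entries are $+\imag b$ in both directions so $\imag b\, S^n$ with $S^{-n}=S^n$ is right, and oddness of $n$ enters twice — antipodal vertices have opposite parity, and $(-1)^{k+n}=-(-1)^k$ gives the conjugate pair on the block diagonal. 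The paper instead keeps the label ordering, in which $\dicotmat{K}(\dicot{W}_n)$ has $2\times 2$ block form with blocks $x\iden$ and $\pm aA+\imag b\,\iden$; since these commute, $\det\dicotmat{K}(\dicot{W}_n)=\det\bigl((x^2+b^2)\iden+a^2A^2\bigr)$, and $A^2$ is an $n\times n$ circulant whose eigenvalues $4\cos^2(\pi j/n)$ follow from roots of unity. The two arguments use the same spectral data — your DFT on $\mathbb{Z}/2n\mathbb{Z}$ is, in effect, the paper's circulant computation carried out one level up — but the paper's commuting-block identity collapses the problem to a real $n\times n$ determinant in one line, whereas your version, at the cost of the $D$-to-mode-shift bookkeeping you flag, yields strictly more: an explicit unitary block-diagonalisation of $\dicotmat{K}(\dicot{W}_n)$ itself, from which its full spectrum or inverse (e.g.\ for correlation computations) would follow immediately.
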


\begin{proof}
We will use Theorem~\ref{thm:dicot-partn-fn} to calculate $\dicotpf{Z}(\dicot{W}_n)$. The complex adjacency matrix $\dicotmat{K}(\dicot{W}_n)$ using the natural order of the vertices is given by
\[
\dicotmat{K}(\dicot{W}_n) =
\left(
\begin{array}{c|c}
x \iden & a A + \imag \, b \iden \\
\hline\\[-0.3cm]
-a A + \imag \, b \iden & x \iden
\end{array}
\right),
\]
where $A$ is an $n \times n$ symmetric $(0,1)$-band matrix consisting of two consecutive bands above and below the diagonal equidistant from both ends. Since both the matrices in the top-blocks commute, the determinant of $\dicotmat{K}(\dicot{W}_n)$ can be written in a way analogous to a $2\times 2$ matrix as
\begin{equation}
\label{wheeldicot-det}
\begin{split}
\det \dicotmat{K}(\dicot{W}_n) 
&= \det(x \iden \cdot x \iden - (a A + \imag \, b \iden) \cdot (-a A + \imag \, b \iden)) \\
&= \det((x^2 + b^2) \iden + a^2 A^2).
\end{split}
\end{equation} 
It now suffices to calculate the eigenvalues of $A^2$. But this is easily done since $A^2$ is of the form
\[
A^2 = 
\begin{pmatrix}
2 & 1 & 0 & \cdots & 0 & 1 \\
1 & \ddots & \ddots & 0 & \cdots & 0 \\
0 & \ddots & \ddots & \ddots & \ddots & \vdots \\
\vdots & \ddots & \ddots & \ddots & \ddots & 0 \\
0 & \cdots & 0 & \ddots & \ddots & 1 \\
1 & 0 & \cdots & 0 & 1 & 2
\end{pmatrix},
\]
and hence is a circulant matrix. Thus, the eigenvalues of $A^2$ are given by
\[
\lambda_j = 2 + \omega_j + \omega_j^{n-1}, \qquad \text{for $j=0,1,\dots,n-1$},
\]
where $\omega_j = \exp(2 \pi \imag \, j /n)$ is the $n$'th root of unity. It is easy to see that 
\[
\lambda_j = 4 \cos^2 \frac{\pi j}{n}.
\]
Since the determinant of $\dicotmat{K}(\dicot{W}_n)$ is the product of its eigenvalues, we use this expression in \eqref{wheeldicot-det} to complete the proof.
\end{proof}

From Theorem~\ref{thm:wheel-dicot-partn-fn}, one can also calculate the free energy of the \mdm{} for wheel dicots. Since the relevant parameter is the ratio of $4a^2$ and $x^2 + b^2$, we set the former to $\alpha$ and the latter to $1$. The free energy is 
\begin{align*}
F(\dicot{W}) &:= \lim_{n \to \infty} \frac{1}{n} \log \dicotpf{Z}(\dicot{W}_n), \\
&= \lim_{n \to \infty} \frac{1}{n} \sum_{j=0}^{n-1} 
\log \left(1 + \alpha \cos^2 \frac{\pi j}{n} \right).
\end{align*}
We replace the right hand side by the Riemann integral to obtain
\[
F(\dicot{W}) = \int_0^1 \log \left(1 + \alpha \cos^2 (\pi t) \right) \text{d}t,
\]
which is essentially the same integral that we saw for the free energy of cycle dicots in Section~\ref{sec:cycles}. Performing the integral, we obtain
\[
F(\dicot{W}) = 2 \log \left( \frac{1 + \sqrt{1 + \alpha}}2 \right).
\]

\section*{Acknowledgements}
We thank Jeremie Bouttier for discussions and anonymous referees for many useful comments.
This work is supported in part by the UGC Centre for Advanced Studies and by Department of Science and Technology grants
DST/INT/SWD/VR/P-01/2014 and EMR/2016/006624.


\begin{thebibliography}{GJL16}

\bibitem[Ayy13]{ayyer2013}
Arvind Ayyer.
\newblock Determinants and perfect matchings.
\newblock {\em Journal of Combinatorial Theory, Series A}, 120(1):304--314,
  2013.

\bibitem[Ayy15]{ayyer2015}
Arvind Ayyer.
\newblock A statistical model of current loops and magnetic monopoles.
\newblock {\em Mathematical Physics, Analysis and Geometry}, 18(1):1--19, 2015.

\bibitem[Ciu97]{ciucu-1997}
Mihai Ciucu.
\newblock Enumeration of perfect matchings in graphs with reflective symmetry.
\newblock {\em J. Combin. Theory Ser. A}, 77(1):67--97, 1997.

\bibitem[GJL16]{giuliani-jauslin-lieb-2016}
Alessandro Giuliani, Ian Jauslin, and Elliott~H. Lieb.
\newblock A {P}faffian formula for monomer--dimer partition functions.
\newblock {\em Journal of Statistical Physics}, 163(2):211--238, 2016.

\bibitem[HB73]{hoggatt-bicknell-1973}
V.~E. Hoggatt, Jr. and Marjorie Bicknell.
\newblock Roots of {F}ibonacci polynomials.
\newblock {\em Fibonacci Quart.}, 11(3):271--274, 1973.

\bibitem[Joc94]{jockusch-1994}
William Jockusch.
\newblock Perfect matchings and perfect squares.
\newblock {\em Journal of Combinatorial Theory, Series A}, 67(1):100 -- 115,
  1994.

\bibitem[Kas61]{kasteleyn1961}
Pieter~W Kasteleyn.
\newblock The statistics of dimers on a lattice: I. the number of dimer
  arrangements on a quadratic lattice.
\newblock {\em Physica}, 27(12):1209--1225, 1961.

\bibitem[Ken97]{kenyon1997}
Richard Kenyon.
\newblock Local statistics of lattice dimers.
\newblock {\em Ann. Inst. H. Poincar\'e Probab. Statist.}, 33(5):591--618,
  1997.

\bibitem[Tre85]{trench-1985}
William~F. Trench.
\newblock On the eigenvalue problem for {T}oeplitz band matrices.
\newblock {\em Linear Algebra Appl.}, 64:199--214, 1985.

\bibitem[Wu62]{wu1962}
Tai~Tsun Wu.
\newblock Dimers on rectangular lattices.
\newblock {\em Journal of Mathematical Physics}, 3:1265, 1962.

\end{thebibliography}
\end{document}